

\documentclass{sl}    

\usepackage{slsec}     
\usepackage{stud_log} 
\usepackage{slfoot}
\usepackage{slthm}     
                         




\usepackage{amsmath}
\usepackage{amssymb}

\usepackage{enumerate}








\def\kn{\kern.1em}



\newtheorem{theo}{Theorem}[section]  
\newtheorem{coro}[theo]{Corollary}

\theoremstyle{definition}
\newtheorem{defi}[theo]{Definition}

\theoremstyle{proposition}
\newtheorem{prop}[theo]{Proposition}

\theoremstyle{lemma}
\newtheorem{lem}[theo]{Lemma}





\setcounter{vol}{82}    
\setcounter{rok}{2006}  
 


\usepackage{latexsym}

\begin{document}

\title{\bf On Blass translation for Le\'{s}niewski's propositional ontology and modal logics}
\author{\bf Takao Inou\'{e}} 
\date{}
\maketitle

\setcounter{page}{1}     

 






   



\centerline{This is the second revised version (v2) of this paper, October 31, 2020.}

\bigskip

\begin{abstract}
In this paper, we shall give another proof of the faithfulness of Blass translation (for short, $B$-translation) of the propositional 
fragment $\bf L_1$ of Le\'{s}niewski's ontology in the modal logic 
$\bf K$ \it by means of Hintikka formula\rm . And we extend the result to  
von Wright-type deontic logics, i.e., ten Smiley-Hanson systems of monadic deontic logic.
As a result of observing the proofs we shall give general theorems on the faithfulness of 
$B$-translation with respect to normal modal logics complete to certain sets of well-known 
accessibility relations with a restriction that transitivity and symmetry are not set at 
the same time. As an application of the theorems, for example, $B$-translation is faithful 
for the provability logic $\bf PrL$ (= $\bf GL$), that is, 
$\bf K$ $+$ $\Box (\Box \phi \supset \phi) \supset \Box \phi$. The faithfulness also holds 
for normal modal logics, e.g., $\bf KD$, $\bf K4$, $\bf KD4$, $\bf KB$. We shall conclude 
this paper with the section of some open problems and conjectures.
\end{abstract}

\Keywords{Le\'{s}niewski's ontology, propositional ontology, Le\'{s}niewski's epsilon, modal interpretation, 
interpretation, translation, faithfulness, embedding, normal modal logic, modal 
logic \bf K\rm,  Hintikka formula, tableau method, provability logic, deontic logic, 
serial frame, transitive frame, irreflexive frame, Euclidean frame,  almost reflexive frame, 
(almost) symmetric frame, bi-intuitionistic logic, display logic, bimodal logic, 
multi modal logic, temporal logic.}


\section{Introduction}

In Inou\'{e} \cite{inoue1}, a partial interpretation of Le\'{s}niewski's epsion $\epsilon$ in the modal logic $\bf K$ and 
its certain extensions was proposed: that is, Ishimoto's propositional fragment $\bf L_1$ (Ishimoto \cite{ishi}) of 
Le\'{s}niewski's ontology $\bf L$ (refer to Urbaniak \cite{urbaniak-book}) is partially embedded in $\bf K$ and 
in the extensions, respectively, by the following 
translation $I(\cdot)$ from $\bf L_1$ to them:

\smallskip

(1.i) \enspace $I(\phi \vee \psi)$ = $I(\phi) \vee I(\psi)$, 

\smallskip

(1.ii) \enspace $I(\neg \phi)$ = $\neg I(\phi)$, 

\smallskip

(1.iii) \enspace $I(\epsilon ab)$ = $p_a \wedge \Box (p_a \equiv p_b)$,

\smallskip

\noindent where $p_a$ and $p_b$ are propositional variables corresponding to the name variables $a$ and $b$, respectively. 
Here, ``$\bf L_1$ is partially embedded in $\bf K$ by $I(\cdot)$" means that for any formula $\phi$ 
of a certain decidable nonempty set of formulas of $\bf L_1$ (i.e. decent formulas (see $\S 3$ of Inou\'{e} \cite{inoue3})), $\phi$ is 
a theorem of $\bf L_1$ if and only if $I(\phi)$ is a theorem of $\bf K$. Note that $I(\cdot)$ is sound. 

The paper \cite{inoue3} also proposed similar interpretations of Le\'{s}niewski's epsilon in certain 
von Wright-type deontic logics, that is, ten Smiley-Hanson systems of monadic deontic logic
and in provability logics (i.e., the full system $\bf PrL$ (= $\bf GL$) of provability logic and its subsystem $\bf BML$ (= $\bf K4$), 
respectively. (See \AA qvist \cite{aq}, and Smory\'{n}ski \cite{smo} or Boolos \cite{boolos} for those logics.)

The interpretation $I(\cdot)$ is however not faithful. A counterexample for the faithfulness is, for example, 
$\epsilon ac \wedge \epsilon bc. \supset . \epsilon ab \vee \epsilon cc$ (for the details, see \cite{inoue3}). Blass \cite{blass} 
gave a modification of the interpretation and showed that his interpretation $T(\cdot)$ is faithful, using Kripke 
models. In this paper, we shall call the faithful interpretation (denoted by $B(\cdot)$) \it Blass 
translation \rm (for short, \it $B$-translation\rm ) or  \it Blass interpretation \rm (for short, \it B-interpretation\rm ). 
The translation $B(\cdot)$ from $\bf L_1$ to $\bf K$ is defined as follows:

\smallskip

(2.i) \enspace $B(\phi \vee \psi)$ = $B(\phi) \vee B(\psi)$, 

\smallskip

(2.ii) \enspace $B(\neg \phi)$ = $\neg B(\phi)$, 

\smallskip

(2.iii) \enspace $B(\epsilon ab)$ = $p_a \wedge \Box (p_a \supset p_b) \wedge. p_b \supset \Box (p_b \supset p_a)$,

\smallskip

\noindent where $p_a$ and $p_b$ are propositional variables corresponding to the name variables $a$ and $b$, respectively.

Now the purpose of this paper is to give another proof of the faithfulness of Blass translation 
($B$-translation) of Le\'{s}niewski's propositional ontology $\bf L_1$ in the modal logic 
$\bf K$ \it by means of Hintikka formula\rm . This will be done in \S4. In \S 3, we shall give chain equivalence 
relation and so on, as important technical tools for this paper. This section wil present the notion of chain and tail 
in Kobayashi and Ishimoto \cite{koishi} as a refined version of it. After the 
main result in \S4, we shall extend the faithfulness result to von Wright-type deontic logics, i.e., 
ten Smiley-Hanson systems of monadic deontic logics in \S5. Observing the proof of the faithfulness 
for $\bf K$,  we shall, in \S6, give a general theorem on the faithfulness of $B$-translation with 
respect to normal modal logics with transitive or irreflexive frames. As an application of the generalization, 
we shall obtain the faithfulness for provability logics (i.e., the full system $\bf PrL$ (= $\bf GL$) of 
provability logic and its subsystem $\bf BML$ (= $\bf K4$)). $\bf PrL$ is characterized by 
$\bf K$ $+$ $\Box (\Box \phi \supset \phi) \supset \Box \phi$. $\bf PrL$ is also characterized by 
$\bf K4$ $+$ $\Box (\Box \phi \supset \phi) \supset \Box \phi$. From careful consideration of 
the proof of the faithfulness for deontic logics, we shall obtain further general theorems for 
the faithfulness for the propositional normal modal logics which are complete to certain sets of well-known
accessibility relations with a restriction that transitivity and symmetry are not set at 
the same time. As the result of the consideration, we shall show that the faithfulness also holds for normal modal 
logics, e.g., $\bf KD$, $\bf KT$, $\bf K4$, $\bf KD4$ and $\bf KB$. 
This will be presented in \S7. The last section, \S8 contains some open problems and conjectures. 
In the following \S2, we shall first collect the basic preliminaries for this paper. 

Before ending of this section, we shall give some comments over my motivations and the significance of this paper.

One motive from which I wrote \cite{inoue1} and \cite{inoue3} is that I wished to understand Le\'{s}niewski's epsilon $\epsilon$ on the basis of 
my recognition that Le\'{s}niewski's epsilon would be a variant of truth-functional equivalence $\equiv$. Namely, my original approach to the 
interpretation of $\epsilon$ was to express the deflection of $\epsilon$ from $\equiv$ in terms of Kripke models. \it Here we emphasize 
to mention that this recognition for Le\'{s}niewski's epsilon is in effect within the propositional fragment $\bf L_1$ of 
Le\'{s}niewski's ontology $\bf L$. We do not intend to mean it for the whole system $\bf L$\rm . We may say that this is an 
approximation of $\epsilon$ by something simple and symmetric, that is equivalence. In analysis, the fundamental idea of calculus is the local 
approximation of functions by linear functions. In mathematics, we can find a lot of such examples as that of calculus. As an example in logic, 
a Gentzen-style formulation of intuitionistic logic is nothing but an expression of the deflection from the complete geometrical symmetry of 
Gentzen's $\bf LK$. Gentzenization of a logic also has such a sense, as far as I am concerned.

It is well-known that Le\'{s}niewski's epsilon can be interpreted by the Russellian-type definite description in classical first-order predicate 
logic with equality (see \cite{ishi}). Takano \cite{takano1} proposed a natural set-theoretic interpretation for the epsilon. 
The other motive for the above papers of mine is that I wanted to avoid such interpretations when we interpret Le\'{s}niewski's epsilon. 
I do not deny the interpretation using the Russellian-type definite description and a set-theoretic one. I was rather anxious to obtain 
another quite different interpretation of Le\'{s}niewski's epsilon having a more propositional character. We now have, in hand,  the B-translation 
of which the discovery is, in my opinion, monumental or a milestone for the future study of Le\'{s}niewski's systems. 
B-translation is very natural, if one tries to understand it in an intuitive way. However I believe that there are still other interpretations 
which are in the spirit of my original motivation. I feel this study area of the interpretation treated here has been rather neglected until now. 

Even the very simple sturcture of $\bf L_1$  gives broad possibilities of its interpretation with respect to many logics. Its study is not trivial and will 
create rich research areas. Simple subsystems of groups, for example, semigroups and quasigroups  produce fruitful results in mathematics. 
I think that $\bf L_1$ is such a mathematical system. For this, in the last section, the reader will see some problems to be researched.

In a sense, the study of interpretations of Le\'{s}niewski's epsilon in $\bf L_1$ is, as it were, the study of modal or deontic logic itself from 
another point of view, or gives an inspiration of the study. On this point, I shall eraborate it a bit more. First, we shall propose the following 
conjecture of mine:

\smallskip 

[\bf Fundamental Conjecture\rm ] (Conjecture 4 in \S 8). Le\'{s}niewski's elementary ontology $\bf L$ is embedded in some \it first-order 
\rm modal predicate logic.

\smallskip

\noindent It is well-known that $\bf L$ with axiom $\exists S (\epsilon SS)$ is embedded in monadic second-order predicate logic (see Smirnov 
\cite{smirnov1}, \cite{smirnov2} and Takano \cite{takano3}). I believe that by introuducing a modal operator, we could reduce second-order to 
first-order for the embedding. I think that this conjecture is very important and it would be the next milestone of the study in the direction of 
this paper. I here wish to focus on the second-order-like power of modal operator (logic), which would be lurked in modal logics. After I finished 
the first version of this paper, I encountered two phenomenan of the relation between modal logic and second-orderedness. One is Thomason's 
reduction of second-order logic to modal logic (see Thomason \cite{thomasonJSL, thomasonZML}). Thomason says in \cite[p. 107]{thomasonZML} as 
the following quotation:
\begin{quote}
This would seem to confirm the author's opinion [1] that "propositional modal logic (with the usual  relational semantics) must be understood as 
a rather strong fragmentt of classical second-order predicate logic".
\end{quote}
The other is Dumitru's concept in \cite{dumi} that modal imcompleteness is to be explained in terms of the incompleteness of standard second-order logic, 
since modal language is basically a second order language. In \cite{dumi}, Dumitru shows that there is a close connection between modal incompleteness 
and the incompleteness of standard second-order logic. So these papers give me the confidence of my Fundamental Conjecture. And this is, 
in a sense, a study of modal logic itself in order to understand the second-orderedness of modal logic. It is also interesting to know how Le\'{s}niewski's 
epsilon and its system is expressed and understood in topos theory through modal operators e.g. in Awodey, Kishida and Kotzsch \cite{awodey}, where 
in a certain setting, a modal operator on Heyting algebra is given in the form of a comonad. 

The model theory of this paper is based on Kobayashi and Ishimoto \cite{koishi}. In the paper, Hintikka formulas are defined in order to prove the 
completeness theorem for $\bf L_1$. Hintikka formulas appear in the end of open branches of a tableau of a formula unprovable in $\bf L_1$. 
By a Hintikka formula of a formula, say $A$, we can obtain a model which falsifies the formula $A$ for a proof of the completeness theorem. Hintikka 
formulas are standard tools for the completeness proof. In our paper, we shall use Hintikka formulas to have a model for falisification to prove the 
faithfulness of Blass translation. The novelty of this paper is that the construction of models using Hintikka formula can be easily generalized 
in order to extend our faithfulness results, especially that of deontic logics, provability logic and some normal modal logics. For automated 
reasoning, I just mention that the Hintikka formulas of $\bf L_1$ can be also used as examples of which the validity in $\bf K$ is checked 
(see e.g. Lagniez et al \cite{lima}). 

The significance of this paper will be understood as synthesis of the above mentioned faithfulness results, novelty of generalization, giving 
inspirations, some recognitions on interpretations, a refinement of the notion of chain and tail, and  presenting open problems and conjectures.


\section{Propositional ontology $\bf L_1$ and its tableau method}

Let us recall a formulation of $\bf L_1$, which was introduced in \cite{ishi}. The Hilbert-style system of it, denoted again by 
$\bf L_1$, consists of the following axiom-schemata with a formulation of classical propositional logic $\bf CP$ as its axiomatic basis:

\smallskip 

(Ax1) $\enspace$ $\epsilon ab$ $\supset$ $\epsilon aa$,

\smallskip 

(Ax2) $\enspace$ $\epsilon ab$ $\wedge$ $\epsilon bc$. $\supset$ $\epsilon ac$,

\smallskip 

(Ax3) $\enspace$ $\epsilon ab$ $\wedge$ $\epsilon bc$. $\supset$ $\epsilon ba$,

\smallskip 

\noindent where we note that every atomic formula of $\bf L_1$ is of the form $\epsilon ab$ for some name variables $a$ and $b$ and a possible intuitive interpretation of $\epsilon ab$ is `the $a$ is $b$'.

We note that (Ax1), (Ax2) and (Ax3) are theorems of Le\'{s}niewski's ontology (see S\l upecki \cite{slu}). 

The modal logic $\bf K$ is the smallest logic which contains all instances of classical tautology and all formulas of the forms $\Box (\phi \supset \psi) \supset . \Box \phi \supset \Box \psi$ being closed under modus ponens and the rule of necessitation (for $\bf K$ and basics for modal logic, see Bull and Segerberg \cite{bulseg}, Chagrov and Zakharyaschev \cite{cz}, Fitting \cite{fitting}, Hughes and Cresswell \cite{hc} and so on). 

Let us take $\vee$ (disjunction) and $\neg$ (negation) as primitive for the pure propositional logic part of $\bf L_1$. We shall employ a useful tool, i.e. \it positive $($negative$)$ parts \rm (for short, $\rm p.p.$'s ($\rm n.p.$'s)) of a formula due to Sch\"{u}tte \cite{schu1} and \cite{schutte}. The \it positive \rm and \it negative parts \rm of a formula $\phi$ of $\bf L_{1}$ are inductively defined as follows: (i) $\phi$ is a $\rm p.p.$ of $\phi$; (ii) If $\eta \vee \xi$ is a $\rm p.p.$ of $\phi$, then $\eta$ and $\xi$ are $\rm p.p.$'s of $\phi$; (iii) If $\neg \psi$ is a $\rm p.p.$ of $\phi$, then $\psi$ is a $\rm n.p.$ of $\phi$; (iv) If $\neg \psi$ is a $\rm n.p.$ of $\phi$, then $\psi$ is a $\rm p.p.$ of $\phi$.

By a notation $F [ \phi _{+}]$ ($G [ \phi _{-}]$), which also is due to \cite{schu1}, we mean that a formula $\phi$ occurs in a formula $F [ \phi _{+}]$ ($G [ \phi _{-}]$), as a $\rm p.p.$ ($\rm n.p.$) of $F [ \phi _{+}]$ ($G [ \phi _{-}]$).  Such expressions as $F [ \phi _{+}, \psi _{-}]$ and the like are understood analogously under the conditions such that for example, $F [ \phi _{+}, \psi _{-}]$, the specified formulas $\phi$ (as a $\rm p.p.$) and $\psi$ (as a $\rm n.p.$) in $F [ \phi _{+}, \psi _{-}]$ do not overlap with each other.

\begin{defi} \rm (Kobayashi-Ishimoto \cite{koishi}) 
A formula $\phi$ of $\bf L_{1}$ is said to be a \it Hintikka formula \rm of $\bf L_{1}$ if it satisfies all the following conditions: (1) $\phi$ is not of the form $F [ \psi _{+}, \psi _{-}]$; (2) If $\phi$ contains $\eta \vee \xi$ as a $\rm n.p.$ of $\phi$, then it contains $\eta$ or $\xi$ as a $\rm n.p.$ of it; (3) If $\phi$ contains $\epsilon ab$ as a $\rm n.p.$ of $\phi$, then it contains $\epsilon aa$ as a $\rm n.p.$ of it; (4) If $\phi$ contains $\epsilon ab$ and $\epsilon bc$ as $\rm n.p.$'s of $\phi$, then it contains $\epsilon ac$ as a $\rm n.p.$ of it; (5) If $\phi$ contains $\epsilon ab$ and $\epsilon bc$ as $\rm n.p.$'s of $\phi$, then it contains $\epsilon ba$ as a $\rm n.p.$ of it.
\end{defi}


We shall review the tableau method $\bf TL_{1}$ for $\bf L_{1}$. The tableau method $\bf TL_{1}$ for $\bf L_{1}$ is defined on the basis of the following four reduction rules:
$$\vee_{-} \enspace \frac{G [ \phi \vee \psi _{-} ]}{\enspace G [  \phi \vee \psi _{-} ] \vee \neg \phi \enspace | \enspace G [  \phi \vee \psi _{-} ] \vee \neg \psi \enspace }$$
\smallskip
$$\epsilon_{1} \enspace \frac{G [ \epsilon ab _{-} ]}{\enspace G [ \epsilon ab _{-} ] \vee \neg \epsilon aa \enspace }$$
\smallskip
$$\epsilon_{2} \enspace \frac{G [ \epsilon ab _{-}, \epsilon bc _{-} ]}{\enspace G [ \epsilon ab_{-}, \epsilon bc _{-} ] \vee \neg \epsilon ac \enspace }$$
\smallskip
$$\epsilon_{3} \enspace \frac{G [ \epsilon ab _{-}, \epsilon bc _{-} ]}{\enspace G [ \epsilon ab_{-}, \epsilon bc _{-} ] \vee \neg \epsilon ba \enspace }$$

\smallskip

\noindent where for $\bf TL_{1}$, we take the same logical symbols as primitive as for $\bf L_{1}$. (This tableau method $\bf TL_{1}$ is due to \cite{koishi}.)

By reducing a formula by these reduction rules, we obtain a \it tableau \rm of the formula. A branch of a tableau is \it closed \rm if it ends with a formula of the form $F [ \phi _{+}, \phi _{-}]$, which is called an \it axiom \rm of $\bf TL_{1}$. A branch of a tableau is \it open \rm if it is not closed. A tableau is said to be \it closed \rm if every branch of it is closed, otherwise we call it \it open\rm . A tableau is said to be \it finite\rm , if it has a finite number of branches and evey branch of it is obtained by a finite numbers of applications of reduction rules. A formula of $\bf TL_{1}$ is \it provable in $\bf TL_{1}$ \rm if there exists a closed tableau of it.

\begin{theo} \rm (\cite{koishi} (The Fundamental Theorem for $\bf TL_1$) \it Given a formula \rm (\it of $\bf L_{1}$ \rm ($\bf TL_{1}$)), \it by reducing it with reduction rules we obtain a finite tableau, each branch of which ends either with an axiom of $\bf TL_{1}$ or with a Hintikka formula, where every branch of it is extended by a reduction rule only if the formula to be reduced, say $\phi$, is not an axiom of $\bf TL_{1}$ and the reduction gives rise to a formula not occurring in $\phi$ as a negative part of $\phi$.
\end{theo}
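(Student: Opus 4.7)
The plan is to prove this by establishing (i) termination of every branch of any tableau derivation and (ii) that a saturated open branch (one to which no reduction rule applies under the side condition) ends in a formula satisfying all five clauses of the Hintikka definition.

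First I would fix a starting formula $\phi_0$ and bound, once and for all, the pool of formulas that can ever occur as negative parts anywhere in the tableau. Let $N$ be the (finite) set of name variables occurring in $\phi_0$; since the rules $\epsilon_1,\epsilon_2,\epsilon_3$ only produce atomic formulas $\epsilon aa,\epsilon ac,\epsilon ba$ built from names already present as n.p.'s, every atomic negative part that can ever appear lies in the finite set $\{\epsilon uv : u,v\in N\}$. The only non-atomic negative parts that can ever appear are subformulas of negative parts of $\phi_0$, since $\vee_-$ only creates negations of direct disjuncts of existing negative parts and the $\epsilon$-rules create negations of atomic formulas. Let $\mathcal{N}(\phi_0)$ denote this finite pool of potential negative parts. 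Next I would observe that under the side condition — "the reduction gives rise to a formula not occurring in $\phi$ as a negative part of $\phi$" — each admissible rule application strictly enlarges the set of negative parts of the current branch formula within $\mathcal{N}(\phi_0)$. Hence the number of rule applications along any branch is at most $|\mathcal{N}(\phi_0)|$, so every branch terminates. Since the only branching rule is $\vee_-$ and it is binary, finitely branching plus bounded depth gives a finite tableau.

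Second, I would verify that a terminal open-branch formula $\phi$ is a Hintikka formula. Clause (1) is immediate: $\phi$ is terminal and not closed, hence not of the form $F[\psi_+,\psi_-]$. For clauses (2)–(5) the argument is uniform: if $\phi$ contained a negative part of the shape triggering one of the rules $\vee_-, \epsilon_1, \epsilon_2, \epsilon_3$ and the required derived negative part ($\eta$ or $\xi$; $\epsilon aa$; $\epsilon ac$; $\epsilon ba$) were \emph{not} already a negative part of $\phi$, then the corresponding rule would satisfy the side condition and be applicable, contradicting terminality. For $\vee_-$ one uses that each of the two branches produced by the rule contributes $\neg\eta$ or $\neg\xi$ as a fresh positive disjunct, making the corresponding subformula a negative part in that branch, which is exactly clause (2).

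The main obstacle, I expect, is giving a clean termination argument: the tableau formulas grow (new disjuncts are appended), so one needs a measure that decreases despite this growth. My plan avoids a clever ordinal measure by identifying the right invariant, namely the finite pool $\mathcal{N}(\phi_0)$ of possible negative parts together with the side condition forcing strict monotone growth of the set of realized negative parts; all the remaining work is routine bookkeeping about which n.p.'s the $\vee_-$ and $\epsilon_i$ reductions can add to $\mathcal{N}(\phi_0)$. Once this pool is identified, both finiteness of the tableau and the saturation characterization of terminal formulas follow directly.
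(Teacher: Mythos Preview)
The paper does not actually prove this theorem; it is stated with a citation to Kobayashi and Ishimoto \cite{koishi} and used as a black box thereafter. Your argument is correct and is exactly the standard termination-plus-saturation proof one expects for such a tableau calculus: bound the finite pool $\mathcal{N}(\phi_0)$ of possible negative parts (subformulas of the initial n.p.'s of $\phi_0$ together with the at most $|N|^2$ atoms $\epsilon uv$ over the name variables of $\phi_0$), observe that the side condition forces every branch extension to strictly enlarge the set of realized n.p.'s inside that pool, conclude finiteness of each branch and hence (via binary branching of $\vee_-$) of the whole tableau, and finally read off each Hintikka clause (1)--(5) as the contrapositive of applicability of the corresponding rule at a terminal open node. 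This is almost certainly how the original proof in \cite{koishi} proceeds as well, so there is nothing to contrast.
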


A tableau is said to be \it normal\rm , if it is constructed in compliance with the condition of Theorem 2.2.

If a normal tableau has a Hintikka formula $\phi$ at the end of some branch of it (in other words, if the formula to be reduced is not provable in $\bf TL_{1}$), 
then it is obvious from its normality that $\phi$ cannot be reduced further. Note that a normal tableau of a formula is not always the shortest one of the 
possible tableaux of it.

\begin{defi} \rm 
Let $\phi$ be a formula of $\bf L_{1}$.
For any formula $\phi$ of $\bf L_{1}$, we call $\psi$ to be \it a Hintikka formula of $\phi$\rm , if one of open branches of a normal tableau of $\phi$ ends 
with $\psi$.
\end{defi}

\begin{theo} \rm (\cite{inoue2} and \cite{koishi}) \it 
For any formula $\phi$ of $\bf L_{1}$, we have
$$\vdash _{\bf TL_{1}} \phi \Leftrightarrow \mathrm{ } \vdash _{\bf L_{1}} \phi.$$
\end{theo}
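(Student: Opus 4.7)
The statement to prove is the equivalence $\vdash_{\bf TL_1} \phi \Leftrightarrow \vdash_{\bf L_1} \phi$, so the natural plan is to split it into two directions. For the direction $\vdash_{\bf L_1} \phi \Rightarrow \vdash_{\bf TL_1} \phi$, I would verify that every axiom-schema of $\bf L_1$ has a closed tableau in $\bf TL_1$. For the classical tautologies this reduces to the standard fact that the propositional part of $\bf TL_1$ (with $\vee_{-}$ and the definition of an axiom $F[\phi_+,\phi_-]$) is a complete refutation calculus for $\bf CP$. For (Ax1), (Ax2), (Ax3), the reduction rules $\epsilon_1,\epsilon_2,\epsilon_3$ are tailor-made so that a single application of each, followed by closure on the repeated $\epsilon$-atoms, produces a closed tableau. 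One also needs modus ponens to be admissible in $\bf TL_1$; this I would handle by the usual cut-elimination style argument, or more cheaply by observing that $\bf TL_1$ refutes exactly the $\bf L_1$-consistent formulas, once the converse direction is in hand.

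For the direction $\vdash_{\bf TL_1} \phi \Rightarrow \vdash_{\bf L_1} \phi$, I plan a direct structural induction on the closed tableau of $\phi$. The leaves are tableau axioms $F[\phi_+,\phi_-]$, which are classical tautologies and hence theorems of $\bf L_1$. For the induction step it suffices to show that each reduction rule is \emph{L\textsubscript{1}-sound upward}: if all children of a rule-application are $\bf L_1$-provable, then so is the parent. For $\vee_{-}$ this is pure propositional reasoning: from $\vdash_{\bf L_1} G[\phi\vee\psi_-]\vee\neg\phi$ and $\vdash_{\bf L_1} G[\phi\vee\psi_-]\vee\neg\psi$ one obtains $\vdash_{\bf L_1} G[\phi\vee\psi_-]\vee\neg(\phi\vee\psi)$, and the p.p./n.p.\ machinery of Sch\"utte shows that substituting a true conjunct into a n.p.\ position is L\textsubscript{1}-provable (a standard lemma: $F[\chi_-]\vee\neg\chi$ implies $F[\chi_-]$ modulo $\bf CP$). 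For $\epsilon_1$, $\epsilon_2$, $\epsilon_3$ the corresponding lemma is applied with $\chi$ equal to $\epsilon aa$, $\epsilon ac$, $\epsilon ba$ respectively, and the residual disjunct $\neg\chi$ is discharged using the relevant axiom among (Ax1)--(Ax3) together with the n.p.\ occurrences of $\epsilon ab$ and $\epsilon bc$ already present in $G$.

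An alternative route to $\vdash_{\bf TL_1} \phi \Rightarrow \vdash_{\bf L_1} \phi$, more in the spirit of the rest of the paper, is to take the contrapositive: if $\phi$ is not provable in $\bf L_1$, then by Theorem 2.2 a normal tableau of $\phi$ has some open branch ending in a Hintikka formula $\psi$. From $\psi$ one constructs a model (of the sort to be detailed in \S3--\S4 via chain equivalence) that falsifies $\psi$, and then one verifies that the reduction rules preserve falsification upward along the branch, so that the initial formula $\phi$ is falsified too; combined with soundness of $\bf L_1$ for this semantics, $\phi$ would be unprovable in $\bf L_1$, a contradiction.

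The main obstacle I expect is the upward-preservation step for the $\epsilon$-rules, not because the underlying logical content is deep, but because the notation $G[\epsilon ab_-,\epsilon bc_-]$ requires careful tracking of where the specified n.p.\ occurrences sit and how a fresh disjunct $\neg\epsilon ac$ (or $\neg\epsilon ba$, or $\neg\epsilon aa$) can be absorbed using the Le\'sniewskian axioms. Once the substitution lemma for p.p./n.p.\ contexts is stated cleanly, each $\epsilon$-rule collapses into a one-line invocation of (Ax1), (Ax2) or (Ax3), and the theorem follows by induction on the height of the closed tableau.
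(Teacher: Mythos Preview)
The paper does not actually supply a proof of this theorem: it is stated with attribution to \cite{inoue2} and \cite{koishi} and then used as a black box. So there is no in-paper argument to compare your proposal against.

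That said, your sketch is a faithful reconstruction of the standard argument one finds in the cited sources. The direction $\vdash_{\bf TL_1}\phi \Rightarrow \vdash_{\bf L_1}\phi$ via upward induction on a closed tableau, discharging each $\epsilon_i$-rule by the matching axiom (Ax$i$) together with the Sch\"utte substitution lemma for n.p.\ contexts, is exactly how Kobayashi--Ishimoto proceed. For the converse direction, your ``alternative route'' is in fact the primary one in \cite{koishi}: rather than proving admissibility of modus ponens syntactically, they go through semantics, using the Hintikka formula at the end of an open branch to build a falsifying model (essentially the chain/tail construction of \S3) and then appealing to soundness of $\bf L_1$. Your first proposed route for $\vdash_{\bf L_1}\phi \Rightarrow \vdash_{\bf TL_1}\phi$, namely a direct cut-elimination for $\bf TL_1$, would also work but is more laborious; the remark that one can ``more cheaply'' get it from the semantic characterization is correct, though you should be explicit that this requires an independently established completeness theorem for $\bf L_1$ with respect to the same semantics, which is indeed available in the literature you would be drawing on.
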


A $\rm p.p.$ is said to be \it minimal \rm if it is not of the form $\neg \phi$ or $\phi \vee \psi$. A $\rm n.p.$ is said to be \it mininal \rm 
if it is not of the form $\neg \phi$. (See \cite[$\rm p. 12$]{schutte}).

We need the following theorem for the proof of our main theorem.

\begin{theo} \rm (\cite[$\rm p. 12$]{schutte}) \it 
If, under a sentential valuation $v$, every minimal $\it p.p.$ of a formula $\phi$ takes the value $0$ $($falsity$)$ and every minimal $\it n.p.$ of $\phi$ takes the value $1$ $($truth$)$, then every $\it p.p.$ of $\phi$ takes the value $0$ and every $\it n.p.$ of $\phi$ takes the
value $1$. $($In particular, $v(\phi) = 0$ holds, since $\phi$ is a $\it p.p.$ of $\phi$.$)$
\end{theo}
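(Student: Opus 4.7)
The plan is to prove the claim by a simultaneous structural induction on the number of connectives in an arbitrary positive or negative part of $\phi$. The simultaneous statement I would establish is: for every subformula $\chi$ of $\phi$, if $\chi$ occurs as a $\mathrm{p.p.}$ of $\phi$ then $v(\chi)=0$, and if $\chi$ occurs as a $\mathrm{n.p.}$ of $\phi$ then $v(\chi)=1$. Specialising to $\phi$ itself (which is a $\mathrm{p.p.}$ of $\phi$ by clause (i) of the inductive definition) yields $v(\phi)=0$, matching the parenthetical assertion of the theorem.

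For the base case I would treat occurrences $\chi$ with no top-level $\neg$ or $\vee$. Any such $\chi$ is by definition a \emph{minimal} $\mathrm{p.p.}$ (resp.\ $\mathrm{n.p.}$), so the conclusion is immediate from the hypothesis of the theorem. In particular this handles atomic $\chi$ (i.e.\ formulas of the form $\epsilon ab$ or, in the pure propositional part, propositional variables), which are always minimal.

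For the inductive step I would distinguish cases on how $\chi$ arises as a part. If $\chi$ is a minimal $\mathrm{p.p.}$ or minimal $\mathrm{n.p.}$, apply the hypothesis directly. Otherwise $\chi$ has shape $\neg\psi$ or $\eta\vee\xi$. If $\chi=\neg\psi$ is a $\mathrm{p.p.}$ of $\phi$, then by clause (iii) of the definition of parts, $\psi$ is a $\mathrm{n.p.}$ of $\phi$; by the inductive hypothesis $v(\psi)=1$, hence $v(\chi)=0$. If $\chi=\neg\psi$ is a $\mathrm{n.p.}$, then by (iv), $\psi$ is a $\mathrm{p.p.}$; the inductive hypothesis gives $v(\psi)=0$, hence $v(\chi)=1$. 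If $\chi=\eta\vee\xi$ is a $\mathrm{p.p.}$, then by (ii), both $\eta$ and $\xi$ are $\mathrm{p.p.}$'s; by the inductive hypothesis each has value $0$, whence $v(\chi)=0$. The remaining logical possibility, that $\chi=\eta\vee\xi$ occur as a $\mathrm{n.p.}$, is not produced by any clause of the definition (only $\neg$-introduction toggles polarity, and disjunction inside a negation has already been absorbed into the preceding case), so it need not be treated separately.

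There is no real obstacle here beyond bookkeeping: the only thing to be careful about is that the induction must be simultaneous on both polarities, because the clauses flip polarity when they pass through a negation. Once that symmetry is built into the statement of the induction, each case reduces to a one-line verification using the truth tables of $\neg$ and $\vee$ together with the appropriate clause of the definition of positive/negative parts.
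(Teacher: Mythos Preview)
The paper does not supply its own proof of this statement; it merely cites Sch\"utte. Your simultaneous induction on the complexity of the part is the standard argument and is correct in outline.

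One point in your write-up needs correction. You dismiss the case ``$\chi=\eta\vee\xi$ occurs as a $\mathrm{n.p.}$'' by asserting it ``is not produced by any clause of the definition.'' That is false: clause~(iii) applied to a $\mathrm{p.p.}$ of the form $\neg(\eta\vee\xi)$ yields exactly such a negative part. The correct reason this case requires no further work is the asymmetry in the paper's definition of minimality: a $\mathrm{n.p.}$ is declared \emph{minimal} as soon as it is not of the form $\neg\psi$, so a disjunction occurring negatively is automatically a minimal $\mathrm{n.p.}$ and is already covered by your ``apply the hypothesis directly'' branch. (Contrast this with minimal $\mathrm{p.p.}$'s, which must avoid both $\neg\psi$ and $\eta\vee\xi$.) With that adjustment your case analysis is complete and the induction goes through.
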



\section{Chain equivalence relation, one more preliminary}

As one more preliminary, we introduce the notion of chain equivalence classes and tails of a Hintikka formula, which are important for our proofs.

\begin{defi} \rm 
For any formula $\phi$ of $\bf L_1$, \it the  name vaiable set $NV_{\phi}$ \rm of $\phi$ is 
defined as follows:
$$NV_{\phi} = \{ a : a \mbox{ is a name variable of } \phi \}.$$
\end{defi}

We note that for any formula $\phi$ of $\bf L_1$, for any Hintikka formula $\psi$ of $\phi$, $NV_{\phi} = NV_{\psi}$ holds.

\begin{defi} \rm 
For any Hintikka formula $\phi$ of $\bf L_1$, \it the  chain name variable set $CN_{\phi}$ \rm of $\phi$ is 
defined as follows:
$$CN_{\phi} = \{ a \in NV_{\phi} : \exists b \in NV_{\phi} (\phi = G [ \epsilon ab _{-}, \epsilon ba _{-}]) \}.$$
\end{defi}

It is possible that $CN_{\phi}$ is empty for some Hintikka formula $\phi$, e.g. $\phi = \epsilon aa$.

\begin{defi} \rm 
For any Hintikka formula $\phi$ of $\bf L_1$, we define a relation on the chain name vaiable set $CN_{\phi}$ \rm of $\phi$ is 
defined as follows:
$$\sim_{\phi} = \{ (a, b) \in CN_{\phi} \times CN_{\phi} : \phi = G [ \epsilon ab _{-}, \epsilon ba _{-}]  \}.$$
\end{defi}

\begin{prop} \it
For any Hintikka formula $\phi$ of $\bf L_1$, the relation $\sim_{\phi}$ on $CN_{\phi}$ is an equivalence relation.
\end{prop}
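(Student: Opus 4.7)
The plan is to verify the three defining properties of an equivalence relation directly from the Hintikka conditions in Definition 2.1, reading $a \sim_\phi b$ as the statement that both $\epsilon ab$ and $\epsilon ba$ occur as negative parts of $\phi$. Symmetry will be essentially immediate from the form of the definition; reflexivity will reduce to Hintikka condition (3); and transitivity will follow from two applications of Hintikka condition (4).

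First I would deal with symmetry, since it is the cheapest: the clause $\phi = G[\epsilon ab_{-}, \epsilon ba_{-}]$ in Definition 3.3 is literally invariant under swapping $a$ and $b$, so $a \sim_\phi b$ iff $b \sim_\phi a$, with the same $G$ witnessing both. Next, for reflexivity, I would take any $a \in CN_\phi$ and unpack Definition 3.2: there is some $b \in NV_\phi$ so that $\epsilon ab$ (and $\epsilon ba$) occurs in $\phi$ as a negative part. Applying Hintikka condition (3) to the n.p.\ $\epsilon ab$ yields $\epsilon aa$ as a n.p.\ of $\phi$, and this single occurrence is what the defining pattern $\phi = G[\epsilon aa_{-}, \epsilon aa_{-}]$ collapses to when $a = b$. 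Thus $a \sim_\phi a$.

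For transitivity, assume $a \sim_\phi b$ and $b \sim_\phi c$. Then all four formulas $\epsilon ab, \epsilon ba, \epsilon bc, \epsilon cb$ occur as n.p.'s of $\phi$. Apply Hintikka condition (4) first to the pair $(\epsilon ab, \epsilon bc)$ to conclude that $\epsilon ac$ is a n.p.\ of $\phi$, and then apply it again to the pair $(\epsilon cb, \epsilon ba)$ to conclude that $\epsilon ca$ is a n.p.\ of $\phi$. Both $a$ and $c$ already lie in $CN_\phi$ by hypothesis, so this gives $a \sim_\phi c$. Notice that condition (5) is not actually required here: transitivity is obtained purely from condition (4) used twice.

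The main obstacle, if one wishes to call it that, is not mathematical but notational. Sch\"utte's convention for the positive/negative-part notation $G[\cdot_{-}, \cdot_{-}]$ is that the two marked occurrences do not overlap. So when I invoke this notation with $a = b$ (reflexivity) or when $\epsilon ab$ and $\epsilon ba$ happen to be syntactically the same formula, I would add an explicit remark that the defining clause of $\sim_\phi$ is to be read as ``both $\epsilon ab$ and $\epsilon ba$ occur as n.p.'s of $\phi$,'' which in these degenerate cases reduces to a single occurrence of $\epsilon aa$. With that convention fixed, the three checks above close the argument.
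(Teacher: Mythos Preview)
Your proof is correct and follows essentially the same route as the paper: symmetry is immediate from the form of Definition~3.3, reflexivity comes from Hintikka condition~(3), and transitivity from two applications of condition~(4) to the very same pairs $(\epsilon ab,\epsilon bc)$ and $(\epsilon cb,\epsilon ba)$. Your explicit remark about the Sch\"utte non-overlap convention in the degenerate case $a=b$ is a useful clarification that the paper leaves implicit (it simply writes $\phi=G[\epsilon aa_-]$ rather than $G[\epsilon aa_-,\epsilon aa_-]$), and your observation that condition~(5) is unneeded here is also correct.
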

\begin{proof}
Let $\phi$ be a Hintikka formula of $\bf L_1$. For any $a \in CN_{\phi}$, $G[ \epsilon aa _{-}] = \phi$ holds by Definition 2.1.(3) and Definition 3.3. 
So we have the reflexivity $a \sim_{\phi} a$. For any $a, b \in CN_{\phi}$, the symmetry $(a \sim_{\phi} b  \Rightarrow \mathrm{ } b \sim_{\phi} a$) 
trivially holds by Definition 3.3 and the definition of negative parts.  For any $a, b, c \in CN_{\phi}$, we have 
$$a \sim_{\phi} b, b \sim_{\phi} c
\Rightarrow \phi =  G_{1} [ \epsilon ab _{-}, \epsilon ba _{-}, \epsilon  bc_{-}, \epsilon cb _{-}].$$
Apply Definition 2.1.(4) and Definition 3.3 to pairs $(\epsilon ab, \epsilon bc)$ and $(\epsilon cb, \epsilon ba).$
Then,  $\phi =  G_{2} [ \epsilon ac _{-}, \epsilon ca _{-}].$ This means $a \sim_{\phi} c.$
So we obtain the transitivity. Hence the relation is an equivalence relation. 
\end{proof}

\begin{defi} \rm 
Let $\phi$ be a Hintikka formula of $\bf L_1$. For any $a \in CN_{\phi}$, the equivalence class of $a$ with respect to $\sim_{\phi}$, $$[a]_{\sim_{\phi}} = \{b \in CN_{\phi} : a \sim_{\phi} b\}$$
is the set of all elements of $CN_{\phi}$ equivalent to $a$, which we call \it the chain equivalence class of $a$ \rm(\it with respect to $\sim_{\phi}$\rm).
\end{defi}

\begin{defi} \rm 
For any Hintikka formula $\phi$ of $\bf L_1$, we define "$CN_{\phi}$ modulo $\sim_{\phi}$'' to be the set $$ChainQ(\phi) = CN_{\phi}  \\/ \sim_{\phi} = \{ [a]_{\sim_{\phi}} : a \in CN_{\phi} \}$$ 
of all chain equivalence classes of $CN_{\phi}$. We call $ChainQ(\phi)$ \it the chain quotient set of $\phi$.\rm We call an element of $ChainQ(\phi)$ a \it chain of $\phi$.
\end{defi}

\begin{defi} \rm (\cite{koishi}) 
For any Hintikka formula $\phi$ of $\bf L_1$, a \it Kobayashi-Ishimoto-chain \rm of $\phi$ is a nonempty (finite) set $C$ of name variables, say $C = \{a_1, a_2, \dots , a_n\}$ ($n \geq 1$) such that: (1) Every pair $a_i$ and $a_j$ ($1 \leq i \leq n$, $1 \leq j \leq n$) belonging to $C$ are connected by a relation defined as $\epsilon a_ia_j$ and $\epsilon a_ja_i$ both of which are $\rm n.p.$'s of $\phi$;(2) The set is maximal with respect to the property in (1).
\end{defi}

\begin{defi} \rm 
For any Hintikka formula $\phi$ of $\bf L_1$, we define \it Kobayashi-Ishimoto-chain set 
$ChainKI(\phi)$ \rm of $\phi$ as follows:
$$ChainKI(\phi) = \{ C : C \mbox{ is a Kobayashi-Ishimoto-chain of } \phi \}.$$
\end{defi}

\begin{prop}  
For any Hintikka formula $\phi$ of $\bf L_1$, we have $ChainQ(\phi) = ChainKI(\phi).$
\end{prop}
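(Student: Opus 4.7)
The plan is to prove the equality by showing the two inclusions $ChainQ(\phi) \subseteq ChainKI(\phi)$ and $ChainKI(\phi) \subseteq ChainQ(\phi)$, leaning throughout on Proposition~3.4 (that $\sim_\phi$ is an equivalence relation) and on the Hintikka conditions (3), (4), (5) of Definition~2.1.

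For the inclusion $ChainQ(\phi) \subseteq ChainKI(\phi)$, I would fix a chain equivalence class $X = [a]_{\sim_\phi}$ and verify it satisfies the two defining conditions of a Kobayashi-Ishimoto-chain. For condition (1), given any $b, c \in X$ with $b \neq c$, transitivity and symmetry of $\sim_\phi$ give $b \sim_\phi c$, so $\epsilon bc$ and $\epsilon cb$ are n.p.'s of $\phi$ directly from Definition~3.3; in the case $b = c$, one uses Hintikka condition (3) applied to any witness $\epsilon ba$ to conclude that $\epsilon bb$ is a n.p. of $\phi$. For maximality, suppose $X' \supseteq X$ also satisfies (1), and let $d \in X'$. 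Then for any $a_0 \in X$ we have $\epsilon d a_0$ and $\epsilon a_0 d$ as n.p.'s of $\phi$, which by Definition~3.3 means $d \sim_\phi a_0$, hence $d \in X$.

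For the reverse inclusion $ChainKI(\phi) \subseteq ChainQ(\phi)$, fix a Kobayashi-Ishimoto-chain $C = \{a_1, \ldots, a_n\}$. All pairs $a_i, a_j$ in $C$ satisfy that $\epsilon a_i a_j$ and $\epsilon a_j a_i$ are n.p.'s of $\phi$, so in particular $a_i \sim_\phi a_j$ and hence every $a_i$ lies in the single class $[a_1]_{\sim_\phi}$, giving $C \subseteq [a_1]_{\sim_\phi}$. (Note that $a_1 \in CN_\phi$ since, taking $b = a_1$ in Definition~3.2, condition (1) of Definition~3.7 guarantees $\epsilon a_1 a_1$ is a n.p.) For the reverse containment $[a_1]_{\sim_\phi} \subseteq C$, let $d \in [a_1]_{\sim_\phi}$. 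By transitivity of $\sim_\phi$ we get $d \sim_\phi a_i$ for every $i$, so $C \cup \{d\}$ still satisfies condition (1) of the Kobayashi-Ishimoto-chain definition, and maximality forces $d \in C$.

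I expect no serious obstacle: the proof is essentially an unpacking of Definitions~3.3, 3.5, 3.6, and 3.7 together with Proposition~3.4. The only delicate points are (a) handling the diagonal case $b = c$ in the pairwise-connection condition for equivalence classes, where Hintikka condition (3) is essential, and (b) being careful that every element of a Kobayashi-Ishimoto-chain automatically lies in $CN_\phi$, which is what makes the comparison between $CN_\phi / {\sim_\phi}$ and $ChainKI(\phi)$ well-posed.
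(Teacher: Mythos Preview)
Your proof is correct and follows essentially the same two-inclusion strategy as the paper. The only cosmetic difference is that for the direction $ChainQ(\phi)\subseteq ChainKI(\phi)$ the paper re-derives the pairwise connectedness of elements of $[a]_{\sim_\phi}$ by appealing directly to Hintikka condition~(4) on concrete pairs of negative parts, whereas you invoke Proposition~3.4 (transitivity and symmetry of $\sim_\phi$) and then unpack Definition~3.3; and for maximality the paper simply cites that equivalence classes form a partition, while you spell out the argument explicitly.
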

\begin{proof} Let $\phi$ be a Hintikka formula of $\bf L_1$. We first prove $ChainQ(\phi) \subseteq ChainKI(\phi)$.
Let $C$ be an element of $ChainQ(\phi)$. Take a name variable $a$ such that $C = [a]_{\sim_{\phi}}$ holds. 
For any $x, y \in C$, $\phi =  G [ \epsilon xy _{-}, \epsilon yx _{-}]$ holds, since 
$\phi =  G_{1} [ \epsilon xa _{-}, \epsilon ax _{-}, \epsilon  ya_{-}, \epsilon ay _{-}].$ Apply Definition 2.1.(4) and Definition 3.3 to pairs
 $(\epsilon xa, \epsilon ay)$ and $(\epsilon ya, \epsilon ax)$.
Then, $\phi =  G [ \epsilon xy _{-}, \epsilon yx _{-}].$
This satisfies Definition 3.7.(1). By Definition 3.6, $C \in ChainQ(\phi)$ is an element of a partition of $CN_{\phi}$. 
So $C$ satisfies Definition 3.7.(2). Hence $C \in ChainKI(\phi)$. 

We shall prove $ChainKI(\phi) \subseteq ChainQ(\phi)$. Let $C$ be an element of $ChainKI(\phi)$. By Definition 3.6, we 
have $C \subseteq CN_{\phi}.$ Take an element $a$ of $C$, arbitrarily. Then, by Defintion 3.7.(1), 
$\phi =  G [ \epsilon ax _{-}, \epsilon xa _{-}]$ holds for any $x \in C$. We obtain $C \subseteq [a]_{\sim_{\phi}}$. Let $y \in [a]_{\sim_{\phi}}$. 
The maximality of $C$ follows $ y \in C$. So, $C = [a]_{\sim_{\phi}}$. Thus $C \in ChainQ(\phi)$. 
\end{proof}

Thus, a Kobayashi-Ishimoto-chain is nothing but our chain.  Our Definition 3.6 clarifies the notion of Kobayashi-Ishimoto-chains.

\begin{defi} \rm (\cite{koishi}) 
For any Hintikka formula $\phi$ of $\bf L_1$ and any chain $C$ of $\phi$, a \it tail \rm of $C$ is a name variable $b$ such that $\epsilon ab$ 
is a $\rm n.p.$ of $\phi$ with $a \in C$ and $b \notin C$. By $TN_{\phi}$, we denote the set of all tails of a Hintikka formula $\phi$.
\end{defi}

We may have a chain of a given Hintikka formula $\phi$ without tails, e.g. $\phi = \neg \epsilon aa$.

\begin{prop} \it 
For any Hintikka formula $\phi$ of $\bf L_1$ and any chain $C$ of $\phi$, no tail of $C$ belongs to any other chains of $\phi$.
\end{prop}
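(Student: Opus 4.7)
The plan is to derive a contradiction by showing that if $b$ were a tail of $C$ that also belonged to some chain $C'$, then $b$ would in fact have to belong to $C$ itself. So suppose $b$ is a tail of $C$ witnessed by some $a \in C$ with $\epsilon ab$ a n.p.\ of $\phi$, and suppose for contradiction that $b$ lies in some chain $C'$ of $\phi$; since $b \notin C$ by Definition 3.10, such a $C'$ would necessarily differ from $C$.

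Since every chain is, by Definition 3.6, an element of the partition $CN_{\phi}/\sim_{\phi}$ and hence a subset of $CN_{\phi}$, we have $b \in CN_{\phi}$. Definition 3.2 then supplies a name variable $c$ such that both $\epsilon bc$ and $\epsilon cb$ occur as n.p.'s of $\phi$. I would then apply Hintikka clause (5) of Definition 2.1 to the n.p.\ pair $(\epsilon ab, \epsilon bc)$ to obtain $\epsilon ba$ as a n.p.\ of $\phi$. With both $\epsilon ab$ and $\epsilon ba$ among the n.p.'s of $\phi$, and with $a \in CN_{\phi}$ (because $a \in C \subseteq CN_{\phi}$), Definition 3.3 yields $a \sim_{\phi} b$. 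Hence $b \in [a]_{\sim_{\phi}} = C$ by Definition 3.5 and the fact that $a \in C$, contradicting $b \notin C$.

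The main (and essentially the only) subtlety is selecting the correct Hintikka closure clause: it is clause (5) applied to the ordered pair $(\epsilon ab, \epsilon bc)$ that symmetrizes the one-sided hypothesis $\epsilon ab$ into $\epsilon ba$, and it is crucial that $b$'s membership in \emph{any} chain guarantees at least one companion $c$ making $\epsilon bc$ available as a n.p.\ and thus enabling the application. Everything else is an immediate consequence of the partition structure of $ChainQ(\phi)$ already established through $\sim_{\phi}$ being an equivalence relation.
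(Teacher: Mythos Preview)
Your proof is correct and follows essentially the same approach as the paper's: assume $b$ is a tail of $C$ witnessed by $a\in C$ with $\epsilon ab$ a n.p., use $b\in CN_\phi$ to obtain some $c$ with $\epsilon bc$ a n.p., apply Definition 2.1.(5) to the pair $(\epsilon ab,\epsilon bc)$ to get $\epsilon ba$ as a n.p., and conclude $b\in C$, contradicting the tail condition. Your write-up is in fact slightly more explicit than the paper's in spelling out why $a\sim_\phi b$ and why $[a]_{\sim_\phi}=C$.
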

\begin{proof} Let $\phi$ be a Hintikka formula of $\bf L_1$. Let $C_1$ and $C_2$ be distinct chains of $\phi$ and $b$ a tail of $C_1$. 
We may suppose that they are nonempty. 
Then there is a name variable $a$ of $C_1$ such that $\epsilon ab$ is a $\rm n.p.$ of $\phi$. 
Suppose that $b$ is a member of $C_2$. Since $b$ is an element of $C_2$, by Definition 3.2, there is some name variable, say $c$ such that $\epsilon bc$ and
 $\epsilon cb$ are n.p.'s of $\phi$.  Since  $\epsilon ab$ is a $\rm n.p.$ of $\phi$,  by Definition 2.1.(5), $\phi$ contains $\epsilon ba$ as its $\rm n.p.$. 
 In other words, $b$ is a member of $C_1$, which contradicts the definition of tails. 
\end{proof}

\begin{defi} \rm 
Let $\phi$ be a Hintikka formula of $\bf L_1$. By $Rest_{\phi}$, we denote $NV_{\phi} - (CN_{\phi} \cup TN_{\phi})$.
\end{defi}

We can analyse the set of name variable occurring in the set of all minimal (atomic) p.p.'s and n.p.'s of a Hintikka formula of $\bf L_1$.
And it plays an important role for the proof of our main theorem.

\begin{theo} \rm (Characterization Theorem for name variables of a Hintikka formula) \it 
For any Hintikka formula $\phi$ of $\bf L_1$ and any name variable $a \in Rest_{\phi}$, $a$ occurs in some minimal p.p. of $\phi$. 
Further, 
$$NV_{\phi} = CN_{\phi} \amalg TN_{\phi} \amalg Rest_{\phi} \enspace \enspace (*)$$
holds, where $\amalg$ means a disjoint union. 
\end{theo}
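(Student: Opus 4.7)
The plan is to handle the decomposition $(*)$ first and then the occurrence claim, invoking Hintikka condition 2.1.(3) together with Proposition 3.8. For the decomposition, the definition $Rest_{\phi} = NV_{\phi} - (CN_{\phi} \cup TN_{\phi})$ immediately gives both the covering $NV_{\phi} = CN_{\phi} \cup TN_{\phi} \cup Rest_{\phi}$ and the disjointness of $Rest_{\phi}$ from the other two sets. What remains is $CN_{\phi} \cap TN_{\phi} = \emptyset$, and this is exactly what Proposition 3.8 delivers: if $b$ is a tail of a chain $C$, then $b \notin C$ by Definition 3.9, and by Proposition 3.8 $b$ lies in no other chain of $\phi$ either, so $b \notin CN_{\phi}$.

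For the occurrence claim I argue contrapositively. Assume $a \in NV_{\phi}$ does not occur in any minimal p.p.\ of $\phi$; the goal is to show $a \in CN_{\phi} \cup TN_{\phi}$. Since every atomic subformula of $\phi$ is either a p.p.\ or an n.p., and atoms of $\bf L_1$ have the form $\epsilon xy$, the variable $a$ must occur in some minimal n.p.\ $\epsilon cd$ of $\phi$ with $a \in \{c, d\}$. First, suppose $a = c$, so $\epsilon ad$ is an n.p.; Hintikka condition 2.1.(3) then forces $\epsilon aa$ to be an n.p., and taking $b := a$ in Definition 3.2 (reading the two specified occurrences in $G[\epsilon ab_{-}, \epsilon ba_{-}]$ as the single occurrence of $\epsilon aa$, exactly as in the proof of reflexivity in Proposition 3.4) gives $a \in CN_{\phi}$. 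Next, suppose $a = d$ and $c \neq a$, so $\epsilon ca$ is an n.p.; condition 2.1.(3) yields $\epsilon cc$ as an n.p., and the same reading gives $c \in CN_{\phi}$. Letting $C$ be the chain of $c$, either $a \in C$ and hence $a \in CN_{\phi}$, or $a \notin C$, in which case $\epsilon ca$ being an n.p.\ with $c \in C$ and $a \notin C$ matches Definition 3.9 and makes $a$ a tail of $C$, so $a \in TN_{\phi}$. Thus in every case $a \in CN_{\phi} \cup TN_{\phi}$, which is the desired contrapositive.

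The point I expect to be most delicate is the Schütte-notation convention in Definition 3.2 when $a = b$: a single n.p.\ occurrence of $\epsilon aa$ must be allowed to witness $\phi = G[\epsilon aa_{-}, \epsilon aa_{-}]$. This convention is already used implicitly in the proof of Proposition 3.4 (reflexivity), and without it the first of the two cases above would not close. Aside from that, the argument is pure bookkeeping with the Hintikka conditions and Proposition 3.8, so I anticipate no further obstacles.
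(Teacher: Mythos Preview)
Your argument for the disjoint union $(*)$ is correct and is precisely what the paper does: Definition~3.12 makes $Rest_\phi$ disjoint from $CN_\phi\cup TN_\phi$ with union $NV_\phi$, and the remaining disjointness $CN_\phi\cap TN_\phi=\emptyset$ comes from Proposition~3.11 (your ``3.8'') together with the definition of tail. The paper's own proof in fact stops here and never addresses the occurrence clause at all.

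Your attempt at the occurrence clause, however, rests on a false premise. The assertion ``every atomic subformula of $\phi$ is either a p.p.\ or an n.p.'' fails because the Sch\"utte decomposition does not descend below a disjunction that occurs as an n.p. Take
\[
\phi \;=\; \neg(\epsilon ab \vee \epsilon cd)\ \vee\ \neg\epsilon ab\ \vee\ \neg\epsilon aa.
\]
One checks directly that this is a Hintikka formula: condition~(2) is met via the n.p.\ $\epsilon ab$, and conditions (3)--(5) are routine from the only atomic n.p.'s $\epsilon ab$ and $\epsilon aa$. The atom $\epsilon cd$ is neither a p.p.\ nor an n.p.\ of $\phi$, so your contrapositive argument cannot even get started for the variables $c,d$. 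Worse, this same example refutes the occurrence clause itself: here $CN_\phi=\{a\}$, $TN_\phi=\{b\}$, hence $c,d\in Rest_\phi$, yet $\phi$ has no minimal p.p.'s whatsoever. The gap is therefore not one you can patch; the first sentence of the theorem is simply false for arbitrary Hintikka formulas, and the paper's subsequent applications (in the proof of Theorem~4.1) only ever invoke $(*)$.
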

\begin{proof} Let $\phi$ be a Hintikka formula of $\bf L_1$. From Definition 3.10 and Proposition 3.11, we have 
$CN_{\phi} \cup TN_{\phi} = CN_{\phi} \amalg TN_{\phi}.$ From Definition 3.12, this leads to $(*)$.
\end{proof}

\begin{lem} \rm (Tail Lemma) \it
Let $\phi$ be a Hintikka formula of $\bf L_1$. For any tail $a$ of a Chain of $\phi$ and any 
$b \in NV_{\phi}$, $\epsilon ab$ does not occur as a n.p. of $\phi$.
\end{lem}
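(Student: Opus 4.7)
The plan is to argue by contradiction, leveraging the Hintikka closure condition (5) of Definition 2.1, which produces the "back-pointing" $\epsilon$-formula from any pair of composable n.p.\ $\epsilon$-atoms. Suppose $a$ is a tail of some chain $C$ of $\phi$ and, contrary to the claim, that $\epsilon ab$ occurs as a n.p.\ of $\phi$ for some $b \in NV_{\phi}$. By Definition 3.10, the fact that $a$ is a tail of $C$ yields a name variable $c \in C$ such that $\epsilon ca$ is a n.p.\ of $\phi$, together with the essential side condition $a \notin C$.

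The key step will be to apply condition (5) of Definition 2.1 to the pair $\epsilon ca$ and $\epsilon ab$ (matching the pattern $\epsilon \alpha\beta$, $\epsilon\beta\gamma$ with $\alpha = c$, $\beta = a$, $\gamma = b$) to conclude that $\epsilon ac$ is also a n.p.\ of $\phi$. Combined with $\epsilon ca$ already being a n.p.\ of $\phi$, this places $a$ into $CN_{\phi}$ via Definition 3.2 and gives $a \sim_{\phi} c$ via Definition 3.3. Hence $a \in [c]_{\sim_{\phi}} = C$, which directly contradicts $a \notin C$ and finishes the proof.

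I do not anticipate any real obstacle; the argument is essentially one line once condition (5) is invoked. The only places requiring mild care are bookkeeping: first, checking that the corner cases $b = a$ and $b \in C$ are handled uniformly (they are, because the contradiction only needs the existence of \emph{some} back-pointing n.p.\ $\epsilon ac$, not any relation between $b$ and the chain); second, being explicit that the original hypothesis "$a$ is a tail of $C$" supplies both the pivot $c \in C$ and the exclusion $a \notin C$ that together drive the contradiction.
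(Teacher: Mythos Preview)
Your proof is correct, but it follows a different path from the paper's own argument. The paper does not invoke Hintikka condition (5); instead it uses condition (3): from the assumed n.p.\ $\epsilon ab$ one immediately gets $\epsilon aa$ as a n.p., which forces $a \in CN_{\phi}$ (witnessed by $\epsilon aa$ itself), hence $a$ lies in the chain $[a]_{\sim_{\phi}}$, and Proposition~3.11 is then invoked to obtain the contradiction with $a$ being a tail. Your route via condition (5) and the pivot $c \in C$ has the minor advantage of landing $a$ directly in the \emph{same} chain $C$ of which it is supposed to be a tail, so the contradiction with $a \notin C$ is immediate and you do not need Proposition~3.11 at all; the paper's route is one step shorter up front (condition (3) needs only the single n.p.\ $\epsilon ab$, not the extra witness $\epsilon ca$) but then relies on the earlier proposition to finish. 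Both arguments are equally valid and of comparable length.
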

\begin{proof} Let $\phi$ be a Hintikka formula of $\bf L_1$. Let $a$ be a tail of a chain of 
$\phi$ and $b \in NV_{\phi}$. Suppose $\phi = G[ \epsilon ab_{-}]$. Since $\phi$ is a 
Hintikka formula of $\bf L_1$, we have $\phi = G_1[ \epsilon ab_{-}, \epsilon aa_{-}]$. This 
means $a \in [a]_{\sim_{\phi}}$. That is, $a$ is an element of the chain $[a]_{\sim_{\phi}}$. 
This contradicts Proposition 3.11.
\end{proof}


\section{Another proof of the faithfulness of $B$-translation using Hintikka formula}

In this section, we shall prove the faithfulness of $B$-translation by means of Hintikka formula with respect to $\bf K$, that is:

\begin{theo} \rm (Blass \cite{blass}) \it 
For any formula $\phi$ of $\bf L_1$, 

\noindent we have
$$(\clubsuit) \enspace \mbox{\it\rm } \vdash_{\bf K} B(\phi)  \Rightarrow \enspace \vdash _{\bf L_1} \phi.$$
That is, $B$-translation is faithful with respect to $\bf K$.
\end{theo}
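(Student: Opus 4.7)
The plan is to argue contrapositively: assuming $\not\vdash_{\bf L_1} \phi$, I will build a Kripke model of $\bf K$ refuting $B(\phi)$. By Theorem 2.4, $\not\vdash_{\bf TL_1} \phi$, and Theorem 2.2 yields a normal tableau of $\phi$ with an open branch ending in a Hintikka formula $\psi$ of $\phi$; in particular, $\phi$ is itself a positive part of $\psi$. Since $B$ commutes with $\vee$ and $\neg$, a routine induction on the positive/negative part structure of $\psi$ (the Kripke-semantics analogue of Theorem 2.5) shows that it suffices to produce a world $w_0$ at which $w_0 \models B(\epsilon cd)$ for every minimal n.p.\ $\epsilon cd$ of $\psi$ and $w_0 \not\models B(\epsilon cd)$ for every minimal p.p.\ $\epsilon cd$ of $\psi$; from there $B(\psi)$, and hence $B(\phi)$, will be false at $w_0$.

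Exploiting the disjoint decomposition $NV_{\psi} = CN_{\psi} \amalg TN_{\psi} \amalg Rest_{\psi}$ of Theorem 3.13, I propose the model $M = (W, R, V)$ given by
\[
W = \{w_0\} \cup \{w_C : C \in ChainQ(\psi)\}, \qquad R = \{(w_0, w_C) : C \in ChainQ(\psi)\},
\]
with $V(p_a, w_0) = 1$ iff $a \in CN_{\psi}$, and $V(p_a, w_C) = 1$ iff $a \in C$ or $a$ is a tail of $C$. Intuitively, $w_0$ records which names are ``alive'' (those appearing in some chain), and each successor $w_C$ isolates one chain together with its tails.

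The atomic checks at $w_0$ then proceed as follows. For a minimal n.p.\ $\epsilon cd$ of $\psi$, condition~(3) of Definition 2.1 gives $c \in CN_{\psi}$ so $p_c$ holds at $w_0$, and $d$ lies either in $c$'s chain $C_c$ or is a tail of $C_c$; both $\Box$-conjuncts of $B(\epsilon cd)$ then hold, since at $w_{C_c}$ both $p_c$ and $p_d$ are true by construction while at every other $w_{C'}$ the antecedent $p_c$ is false by Proposition 3.11. For a minimal p.p.\ $\epsilon cd$: if $c \notin CN_{\psi}$ then the first conjunct $p_c$ already fails at $w_0$; otherwise one argues that $d$ can lie neither in $C_c$ nor be a tail of $C_c$---either configuration, combined with conditions~(4)--(5) of Definition 2.1 and Lemma 3.14, would force $\epsilon cd$ to also occur as a n.p.\ of $\psi$, contradicting condition~(1)---so $p_d$ is false at $w_{C_c}$ while $p_c$ is true there, falsifying $\Box(p_c \supset p_d)$ at $w_0$. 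The main obstacle I anticipate is precisely this disjointness argument for minimal positive parts: once one has shown, via the chain/tail closure conditions, that $d$ is genuinely separated from $c$'s chain and its tails in $\psi$, the rest is routine bookkeeping on the decomposition of $NV_{\psi}$.
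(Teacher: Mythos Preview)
Your proposal is correct and takes essentially the same route as the paper: the paper constructs the identical countermodel---a root world with one successor per chain, the root valuation picking out $CN_\psi$, and each chain-world making true exactly the members and tails of its chain (encoded there via a unit matrix)---and then runs the same atomic case analysis; it also adjoins an irrelevant set $G_\omega$ of isolated reflexive worlds, but notes explicitly that these play no role since $*$ does not access them. Your justification for the third conjunct $p_d \supset \Box(p_d \supset p_c)$ in the n.p.\ case is slightly compressed, but it follows at once: either $d \in C_c$ and the argument is symmetric to the one you give for $p_c$, or $d$ is a tail and $p_d$ is already false at $w_0$.
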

\begin{proof} Let $\phi$ be a formula of $\bf L_{1}$. For the proof, it is sufficient to show  that (not $\vdash _{\bf TL_{1}} \phi$) $\Rightarrow$ 
(not $\vdash _{\bf K} B(\phi)$). 
Then by Theorem 2.4 we obtain the desired meta-implication in $(\clubsuit)$. If $\phi$ is a theorem of $\bf TL_{1}$, then we trivially have 
the meta-implication. Suppose that $\phi$ is not a theorem of $\bf TL_{1}$. 
Then, there exists an open normal tableau such that it has a branch ended with a Hintikka formula, say $\psi$. We immediately see that $\phi$ 
is a $\rm p.p.$ of $\psi$ (observe the reduction rules for $\bf TL_{1}$). 
(We can eventually prove it by induction on derivation.) That is, $\psi$ is a Hintikka formula of $\phi$. 
We may choose such a Hintikka formula arbitrarily as a Hintikka formula of $\phi$ for our model construction below.

Let $v$ be a sentential valuation such that by $v$, every atomic $\rm p.p.$ ($\rm n.p.$) of $\psi$ is assigned falsity 0 (truth 1) 
(the rest of the assignment is at a person's disposal), where we do not need to look 
into the structure of atomic formulas such as $\epsilon ab$. Since $\psi$ is a Hintikka formula, the valuation $v$ makes every minimal 
$\rm p.p.$ ($\rm n.p.$) false (true). Hence by Theorem 2.5, if we have 
$v(\psi) = 0$, then $v(\phi) = 0$ holds, since $\phi$ is a $\rm p.p.$ of $\psi$. 

Since $B(\cdot)$ commutes disjunction and negation, $B(\phi)$ in the setting of $\bf K$ is falsified by the (adapted) valuation $v$ such that 
we have $v(\psi) = 0$ in that of $\bf L_{1}$. In other words, in $B(\phi)$ 
we can regard formulas of the form $B(\epsilon  ab)$ as atomic formulas, when we assign truth-values to them in order to falsify $B(\phi)$.
 
We shall below construct such a valuation, namely a model in which $B(\phi)$ is false. (If we have a model falsifying $B(\phi)$, then by the 
completeness theorem for $\bf K$, $B(\phi)$ is not a theorem of $\bf K$.) 
Given a Hintikka formula $\psi$, we have a finite numbers of its chains and their associated tails, observing its atomic negative and positive parts. 
Say $ChainQ(\psi) = \{C_1, \dots , C_n \}$ ($n \geq 0$). If $n = 0$ 
(that is, $\psi$ contains no atomic negative parts), then we take a Kripke model $\cal M_{\rm 0}$ =$<G_0 , R_0 , V_0>$ such that 

$G_0 = \{*, g\} \cup G_{\omega}$ ($* \neq g$), $R_0 = \{(*, g) \cup \{(\eta, \eta) : \eta \in  G_{\omega} \}$.

\noindent $V_0(p_a, x) = 0$ for any name variable $a$ and any $x \in G_0$, where 

$G_{\omega} = \{\eta_{j}: j \in \omega\}$, $Card(G_{\omega}) = \aleph_0$, $\{*, g \} \cap G_{\omega} = \emptyset$.

\noindent ($V_0(p, x)$ stands for the truth-value of a propositional variable $p$ in a world $x$ (by $V_0$) in $\cal M_{\rm 0}$). 
Then we easily see that $B(\phi)$ is false in $*$ in $\cal M_{\rm 0}$, since $B(\epsilon ab)$, that is, 
$$p_a \wedge \Box (p_a \supset p_b) \wedge. p_b \supset \Box (p_b \supset p_a)$$ is trivially false in $*$ in $\cal M_{\rm 0}$ regardless of 
the modality.
So suppose $n \geq 1$. 
Now we shall construct a Kripke model $\cal M$ =$<G , R , V>$ (below we shall write $g \models \phi$ for 
$\models^{\cal M}_g \phi$ (i.e., $\phi$ is true in a world $g$ in $\cal M$) as follows.

\smallskip 

\noindent (M1) $G = \{*, g_1, \dots , g_n\} \cup G_{\omega}$, 

\smallskip 

\noindent where $G_{\omega} = \{\eta_{j}: j \in \omega\}$, $Card(G_{\omega}) = \aleph_0$, 
$\{*, g_1, \dots , g_n\} \cap G_{\omega} = \emptyset$ and $*, g_1, \dots , g_n$ are distinct.

\noindent (M2) $R = \{(*, g_i ) : 1 \leq i \leq n\} \cup \{(\eta, \eta) : \eta \in  G_{\omega} \}$.

\noindent (M3) 

(i) For any name variable $a \in NV_{\psi}$,  
$$V(p_a, *) = \left\{\begin{array}{ll} 1 & \mbox{if $a \in CN_{\psi}$,}\\ 0 & \mbox{otherwise.}
\end{array} \right.$$

(ii) Take one name variable $c_i$ from each $C_i$ ($1 \leq i \leq n$) as its representative, 
that is, say $C_i = [c_i]_{\sim_{\psi}}$.
 Assume that if the $p_{c_i}$ is assigned a truth-value in a world, then every propositional variable $p$ 
 corresponding to the other name variable belonged to $C_i$ should be assigned the same truth-value in the same world. 
 Under this assumption, we take the following assignment:
$$\left(\begin{array}{cccc}
V(p_{c_1}, g_1) & V(p_{c_1}, g_2) & \ldots & V(p_{c_1}, g_n) \\
V(p_{c_2}, g_1) & V(p_{c_2}, g_2) &\ldots & V(p_{c_2}, g_n) \\
\vdots & \vdots & \ddots & \vdots \\
V(p_{c_n}, g_1) & V(p_{c_n}, g_2) & \ldots & V(p_{c_n}, g_n) 
\end{array} \right)  = U$$
where $U$ is $n \times n$ unit matrix in linear algebra. (Here we remark that for this construction of Kripke model, we do not need the notion of 
connectedness used in \cite{inoue3}.) We remark that the order of $C_1, \dots , C_n$ does not matter.

(iii) If a name variable $a$ is a tail of some chain, we take the following assignment. First fix the assingment of (M3).(ii). 
Take all chains such that $a$ is a tail of them, say $\tilde{C}_1, \dots , \tilde{C}_M$ $(0 \leq M \leq n)$ (if there is a tail, 
then $M \geq 1$). Choose  $d_i \in \tilde{C_i}$ for any $1 \leq i \leq M$, arbitrarily. Take all indices $\xi_1, \dots , \xi_M$ such 
that $V(p_{d_i}, g_{\xi_i}) = 1$ for any $1 \leq i \leq M$ (note that $d_i$ may be eventually equal to $c_{\xi_i}$ in 
the notation of (M3).(ii)). Such $\xi_1, \dots , \xi_M$ exist because of (M3).(ii), Proposition 3.4 and Definition 3.6. 
Then take, for any $1 \leq i \leq n$,
$$V(p_a, g_i) = \left\{\begin{array}{ll} 1 & \mbox{if $i \in \{\xi_1, \dots , \xi_M\}$,}\\ 0 & \mbox{otherwise.}
\end{array} \right.$$

(iv)  For any $p$ of the rest of propositional variables and any $1 \leq i \leq n$, $V(p, g_i) = 0$.

(v) For any propositional variable $p$ and any $\eta \in G_{\omega}$, $V(p, \eta) = 1$.

Now, let us verify that the model just constructed above actually falsifies $B(\psi)$ in the world $*$, which makes 
$B(\psi)$ invalid. So $\not\models_{\bf K} B(\phi)$ holds from Theorem 2.5. Thus, $\not\vdash_{\bf K} B(\phi)$ by the completeness theorem 
for $\bf K$. Let $D_1, \dots , D_s$ be all distinct atomic $\rm p.p.$'s of $\psi$ and $E_1, \dots , E_t$ all distinct atomic 
$\rm n.p.$'s of $\psi$ ($s + t \geq 1$, $t \geq 1$) because of $n \geq 1$. 
In order to apply Theorem 2.5 to our case, we have to show that

\smallskip

$(\dagger)$ $\enspace$ $* \not\models B(D_k)$ for any $1 \leq k \leq s$ ($s$ may possibly be 0.),

\smallskip

$(\dagger \dagger)$ $\enspace$ $* \models B(E_k)$ for any $1 \leq k \leq t$.

\smallskip

First we remark that we do not need to consider the worlds of $G_{\omega}$ at all, since $*$ does not relate any 
element of $G_{\omega}$. We put the $G_{\omega}$ in order to keep a generality of models.

For verifying $(\dagger)$ and $(\dagger \dagger)$, we need to classify all the name variables (in minimal parts of $\psi$) occurring in $\psi$, 
thus in $\phi$ as follows.

(NV1) All distinct name variables occurring in chains, say $x_1, \dots , x_p$ ($p \geq 1$).
Note $CN_{\psi} =  \{x_1, \dots , x_p\}$.

(NV2) All distinct name variables which are tails of some chain, say $y_1, \dots , y_q$ ($q \geq 0$). 
So, $TN_{\phi} = \{y_1, \dots , y_q\}$.

(NV3) All distinct name variables which are not tails, occurring in some atomic positive parts of $B$, but not in any 
of chains, say $z_1, \dots , z_r$ ($r \geq 0$), that is, $Rest_{\phi}= \{z_1, \dots , z_r\}$.

Note that $x_1, \dots , x_p, y_1, \dots , y_q, z_1, \dots , z_r$ are mutually distinct variables because of Theorem 3.13.

We shall first verify $(\dagger)$. There are the following cases (Case 1)--(Case 5) for $D_k$ ($1 \leq k \leq s$).  
Suppose $s \geq 1$. If $s = 0$, we ignore the verification for the cases.. 

(Case 1): The case of $D_k = \epsilon x_ix_j$ or $\epsilon x_jx_i$ for any $i$ and $j$ with $i < j$, such that $x_i$ and $x_j$ are not 
in the same chain. (Because of Definitions 2.1.(1) and 3.5, it is not possible that $x_i$ and $x_j$ belong to the same chain, 
if $\epsilon x_ix_j$ is a $\rm p.p.$ of $\psi$. We do not need to consider the case of $i = j$ because of Definition 2.1.(1).)  
In this case, by (M3).(ii), there are exactly two indices $\alpha$ and $\beta$ ($\alpha \neq \beta$) ($1 \leq \alpha \leq n$, $1 \leq \beta \leq n$) 
such that 

(a) $x_i \in [c_{\alpha}]$, $g_\alpha \models p_{x_i}$, $g_\alpha \not\models p_{x_j}$, 

(b) $x_j \in [c_{\beta}]$, $g_\beta \models p_{x_j}$, $g_\beta \not\models p_{x_i}$ 

\noindent hold. So we have $g_\alpha \not\models p_{x_i} \supset p_{x_j}$ and $g_\beta \not\models p_{x_j} \supset p_{x_i}$. Since 
$$* \models \Box (p_{x_i} \supset p_{x_j}) \Leftrightarrow .  
g_1 \models p_{x_i} \supset p_{x_j} \wedge  g_2 \models p_{x_i} \supset p_{x_j} \wedge \cdots \wedge g_n \models p_{x_i} \supset p_{x_j}$$ 
holds, we obtain $* \not\models \Box (p_{x_i} \supset p_{x_j})$, when $D_k = \epsilon x_ix_j$. Similarly, we get 
$* \not\models \Box (p_{x_j} \supset p_{x_i})$, when $D_k = \epsilon x_jx_i$. Thus we have $* \not\models B(D_k)$ in both cases.

(Case 2): The case of $D_k = \epsilon x_iy_j$ for any $i$ and $j$ such that $y_j$ is not a tail of the chain to which $x_i$ belongs. Then, 
by (M3).(ii) and (M3).(iii), there is exactly one index $1 \leq \alpha \leq n$ 
such that $V(p_{x_i}, g_\alpha) = 1$ and $V(p_{y_j}, g_\alpha) = 0$. From this, we have $* \not\models \Box (p_{x_i} \supset p_{y_j})$. 
So we get $* \not\models B(D_k)$. 

(Case 3): The case of $D_k = \epsilon x_iz_j$ for any $i$ and $j$. In this case we have 
$* \models p_{x_i}$ and $* \models p_{z_j} \supset . \Box (p_{z_j} \supset p_{x_i})$.
But  $* \not\models \Box (p_{x_i} \supset p_{z_j})$ holds as (Case 1). 

(Case 4): The case of $D_k = \epsilon y_ib$ for any $i$ where $b$ is arbitrary.  From (M3).(i) and Theorem 3.13, $* \not\models p_{y_i}$ holds.
This makes our case verified.

(Case 5): The case of $D_k = \epsilon z_ib$ for any $i$ where $b$ is arbitrary. Then, by (M3).(i), $* \not\models p_{z_i}$, thus $* \not\models B(D_k)$.

Next, we shall verify $(\dagger \dagger)$. In this case, there are only the following two cases for $E_k$ ($1 \leq k \leq t$).

(Case 6): The case of $E_k = \epsilon x_ix_j$ for any $i$ and $j$ such that $x_i$ and $x_j$ belong to the same chain (because $\psi$ is a Hintikka formula). 
By (M3).(i), $* \models p_{x_i}$. By (M3).(ii), we have $* \models \Box (p_{x_i} \supset p_{x_j})$ and $* \models p_{x_j} \supset \Box (p_{x_j} \supset p_{x_i})$, 
since $g_\alpha \models p_{x_i} \equiv p_{x_j}$ holds for any $(1 \leq \alpha \leq n)$. Thus, $* \models B(E_k)$.

(Case 7): The case of $E_k = \epsilon x_iy_j$ for any $i$ and $j$ such that $y_j$ is a tail of the chain to which $x_i$ belongs (because $\psi$ is a Hintikka formula). 
By (M3).(i), $* \models p_{x_i}$. By (M3).(ii) and (M3).(iii), we obtain $* \models \Box (p_{x_i} \supset p_{y_i})$. 
By (M3).(i), we have $* \models p_{y_j} \supset \Box (p_{y_j} \supset p_{x_i})$, since 
$* \models p_{y_{i}} \mbox{   and   } * \not\models \Box (p_{y_j} \supset p_{x_i})$ 
\noindent holds as (Case 6). Hence, $* \models B(E_k)$.

We can now conclude $* \not\models B(\psi)$. This completes the whole proof for $(\clubsuit)$. 
\end{proof}

\begin{coro} \rm (Blass \cite{blass}) \it 
$B$-translation is an embedding of $\bf L_1$ in $\bf K$.
\end{coro}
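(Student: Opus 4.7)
The claim that $B$-translation is an embedding of $\bf L_1$ in $\bf K$ means the biconditional $\vdash_{\bf L_1} \phi \Leftrightarrow \vdash_{\bf K} B(\phi)$. The right-to-left direction is the faithfulness established as Theorem 4.1, so the plan is to prove the soundness direction $\vdash_{\bf L_1} \phi \Rightarrow \vdash_{\bf K} B(\phi)$ and then merely combine the two halves. I would carry out soundness by induction on the length of a Hilbert-style derivation of $\phi$ in $\bf L_1$.

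For the routine part of the induction, observe that $B(\cdot)$ commutes with $\vee$ and $\neg$ (hence with the defined $\wedge$, $\supset$, $\equiv$), so every classical propositional tautology of $\bf L_1$ translates into a formula obtainable from a classical propositional tautology over the propositional variables $p_a, p_b, \ldots$ by uniformly substituting the formulas $B(\epsilon ab)$ for variables; since $\bf K$ contains all tautologies and is closed under uniform substitution, these translations are theorems of $\bf K$. Modus ponens survives the translation because $B(\phi \supset \psi) = B(\phi) \supset B(\psi)$. The real work concerns the three Le\'{s}niewskian axiom schemata. (Ax1) is immediate: the conjunct $\Box(p_a \supset p_a)$ in $B(\epsilon aa)$ is a theorem of $\bf K$ by necessitating the identity tautology, and the remaining conjuncts of $B(\epsilon aa)$ are supplied by $p_a$ from the antecedent. (Ax3) reduces to short bookkeeping: from $B(\epsilon ab) \wedge B(\epsilon bc)$ one reads off $p_b$, discharges $p_b \supset \Box(p_b \supset p_a)$ to obtain $\Box(p_b \supset p_a)$, and uses $\Box(p_a \supset p_b)$ directly to validate $p_a \supset \Box(p_a \supset p_b)$.

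The main obstacle is the transitivity axiom (Ax2). The first two conjuncts of $B(\epsilon ac)$ are straightforward: $p_a$ is in the antecedent, and $\Box(p_a \supset p_c)$ follows from $\Box(p_a \supset p_b) \wedge \Box(p_b \supset p_c)$ by two applications of the $K$-axiom together with propositional reasoning under the box. The delicate conjunct is $p_c \supset \Box(p_c \supset p_a)$: assuming $p_c$, first discharge $p_c \supset \Box(p_c \supset p_b)$ from $B(\epsilon bc)$ to obtain $\Box(p_c \supset p_b)$; then, since $p_b$ appears as a conjunct of $B(\epsilon bc)$, discharge $p_b \supset \Box(p_b \supset p_a)$ from $B(\epsilon ab)$ to obtain $\Box(p_b \supset p_a)$; finally chain the two box formulas by the $K$-axiom to derive $\Box(p_c \supset p_a)$. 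Once these three axiom translations are confirmed as theorems of $\bf K$, soundness follows, and combining it with Theorem 4.1 yields the corollary.
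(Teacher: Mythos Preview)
Your argument is correct. You verify soundness by induction on derivations---checking that each of (Ax1)--(Ax3) translates to a theorem of $\mathbf{K}$---and then combine this with the faithfulness direction of Theorem~4.1 to obtain the biconditional. The case analysis for (Ax2) and (Ax3) is handled cleanly; in particular, your treatment of the delicate third conjunct $p_c \supset \Box(p_c \supset p_a)$ in (Ax2) is exactly the intended manoeuvre.

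By comparison, the paper's own proof is a single line, ``From Theorem~4.1'', which takes the soundness direction for granted: the corollary is attributed to Blass, and the introduction already notes that Blass established that $B(\cdot)$ is faithful (hence an embedding); the paper's contribution is only the alternative proof of faithfulness via Hintikka formulas. So conceptually you follow the same two-step decomposition (soundness plus faithfulness), but you supply the soundness verification explicitly, whereas the paper simply defers it to \cite{blass}. Your version is more self-contained; the paper's is terser but relies on the reader accepting soundness as known.
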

\begin{proof}
From Theorem 4.1.
\end{proof}


\section{The faithfulness for von Wright-type deontic logics}

As von Wright-type deontic logic, we shall deal with ten Smiley-Hanson systems of monadic deontic logic after \AA qvist \cite{aq}, 
that is, $\bf OK$, $\bf OM$, $\bf OS4$, $\bf OB$, $\bf OS5$, $\bf OK^{+}$, $\bf OM^{+}$, $\bf OS4^{+}$, $\bf OB^{+}$, $\bf OS5^{+}$. 

As primitive logical connectives, we take $\top$ (verum), $\bot$ (falsum), $\neg$ (nagation), $O$ (obligation), $P$ (permission), $\wedge$ (conjunction), 
$\vee$ (disjunction), $\rightarrow$ (implication), $\leftrightarrow$ (material equivalence). (We may think of $O$ and $P$ as $\Box$ and $\Diamond$, 
respectively.) 

The well-formed formulas of each system are defined as usual as those of propositional modal logics. 

The two rules of inferences, \it modus ponens \rm and \it $O$-necessitation \rm ($\vdash A$ implies $\vdash OA$) are common to all the ten Smiley-Hanson 
systems of monadic deontic logic.

We need the following axiom schemata for the system.

\smallskip

(A0) All classical propositional tautologies

(A1) \enspace \enspace $PA \leftrightarrow \neg O \neg A$

(A2) \enspace \enspace $O (A \rightarrow  B) \rightarrow (OA \rightarrow OB)$

(A3) \enspace \enspace $OA \rightarrow PA$

(A4) \enspace \enspace $OA \rightarrow OOA$

(A5) \enspace \enspace $POA \rightarrow OA$

(A6) \enspace \enspace $O(OA \rightarrow A)$

(A7) \enspace \enspace $O(POA \rightarrow A)$

\smallskip

First five systems are defined as follows.

\smallskip

$\bf OK$ \enspace = \enspace \enspace A0--A2

$\bf OM$ \enspace = \enspace \enspace A0--A2, A6

$\bf OS4$ \enspace = \enspace \enspace A0--A2, A4, A6

$\bf OB$ \enspace = \enspace \enspace A0--A2, A6, A7

$\bf OS5$ \enspace = \enspace \enspace A0--A2, A4, A5

\smallskip

\noindent Note that A6 and A7 are derivable in $\bf OS5$. 

Let $X$ be any of these five systems. Then we define:

\smallskip

$X^{+}$ \enspace = \enspace $X$, A3.

\smallskip

We shall recall the definition of accessibility relations as follows.

(AR1) \enspace \enspace $R$ is serial in $W$ \enspace \enspace \enspace \enspace \enspace \enspace $\forall x \exists (xRy)$

(AR2) \enspace \enspace $R$ is transitive in $W$ \enspace \enspace \enspace $\forall x \forall y \forall z (xRy \wedge yRz . \supset xRz)$

(AR3) \enspace \enspace $R$ is Euclidean in $W$ \enspace \enspace \enspace $\forall x \forall y \forall z (xRy \wedge xRz . \supset yRz)$

(AR4) \enspace \enspace $R$ is almost reflexive in $W$ \enspace \enspace \enspace $\forall x \forall y \forall z (xRy \supset yRy)$

(AR5) \enspace \enspace $R$ is almost symmetric in $W$ \enspace \enspace \enspace $\forall x \forall y \forall z (xRy \supset . yRz \supset zRy)$, 

\noindent where $W$ is the set of possible worlds. With these relations we characterize the systems as follows:

The class of $\bf OK$-models has no condition $R$ being imposed.

The class of $\bf OM$-models has almost reflexive $R$.

The class of $\bf OS4$-models has transitive and almost reflexive $R$.

The class of $\bf OB$-models has almost symmetric and almost reflexive $R$.

The class of $\bf OS5$-models has Euclidean and transitive $R$.

The class of $\bf OK^{+}$-models has serial $R$.

The class of $\bf OM^{+}$-models has serial and almost reflexive $R$.

The class of $\bf OS4^{+}$-models has serial, transitive and almost reflexive $R$.

The class of $\bf OB^{+}$-models has serial, almost symmetric and almost reflexive $R$.

The class of $\bf OS5^{+}$-models has serial, Euclidean and transitive $R$.

\begin{theo} \rm (See \cite{aq}) \it 
The soundness and completeness theorems hold for any ten Smiley-Hanson systems.
\end{theo}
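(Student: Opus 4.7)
The plan is to establish both directions by the standard machinery of normal modal logic, handling the ten systems uniformly. For soundness, I would check that for each $X \in \{\bf OK, OM, OS4, OB, OS5, OK^+, OM^+, OS4^+, OB^+, OS5^+\}$ every instance of each axiom used to define $X$ is valid on the class of $X$-models listed in the excerpt, and that modus ponens together with $O$-necessitation preserve validity. The required correspondences are textbook one-liners in first-order logic: (A2) is the $K$-axiom, valid on every frame; (A3) corresponds to seriality (AR1); (A4) to transitivity (AR2); (A5), in the presence of (A4), to the Euclidean condition (AR3); (A6) to almost reflexivity (AR4); and (A7) to almost symmetry (AR5). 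I would carry these out axiom by axiom, then read off soundness of each of the ten systems from which axioms it contains.

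For completeness, I would pass to the canonical model in the usual Henkin style. Fix a system $X$, let $\Omega_X$ be the collection of all maximal $X$-consistent sets, define $\Gamma R_X \Delta$ iff $\{A : OA \in \Gamma\} \subseteq \Delta$, and set $V_X(p, \Gamma) = 1$ iff $p \in \Gamma$. The truth lemma, proved by induction on formula complexity and using (A1) to handle the $P$-clause, yields $\Gamma \models^{\cal M_X} A$ iff $A \in \Gamma$; hence any $X$-unprovable formula is falsified somewhere in $\cal M_X$, giving strong completeness modulo the frame conditions.

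The real work, and the main obstacle, is verifying that the canonical frame $\langle \Omega_X, R_X\rangle$ actually belongs to the semantic class defining $X$, and that these verifications combine correctly for the "$+$" variants where seriality is imposed alongside other conditions. The less routine pieces are the two "almost" conditions, which require arguing at successors of worlds rather than at the worlds themselves. For example, almost reflexivity from (A6) goes as follows: if $\Gamma R_X \Delta$, then since $O(OA \supset A)$ is an $X$-theorem and hence belongs to $\Gamma$, one has $OA \supset A \in \Delta$ for every $A$; consequently $\{B : OB \in \Delta\} \subseteq \Delta$, which is precisely $\Delta R_X \Delta$. Analogous, slightly more intricate syntactic manipulations inside maximal consistent sets deliver almost symmetry from (A7), the Euclidean condition from (A5) in the presence of (A4), and seriality from (A3) (via the usual construction of a witness MCS extending $\{A : OA \in \Gamma\}$, which is $X$-consistent thanks to (A3)). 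Once each combination of axioms appearing in the ten systems is matched with the conjunction of corresponding frame conditions in this way, both soundness and completeness follow uniformly, recovering the Åqvist result.
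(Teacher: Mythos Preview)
Your proposal outlines the standard canonical-model argument for soundness and completeness of these deontic systems, and the sketch is essentially correct as such arguments go. However, the paper does not prove this theorem at all: it is stated with the attribution ``(See \cite{aq})'' and treated as a known result from \AA qvist's handbook article, with no proof or even proof sketch supplied. There is therefore nothing in the paper to compare your approach against; the theorem functions purely as an imported fact supporting the subsequent faithfulness arguments.
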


We shall take trivial adaptation $B^O$ of $B$-tranlation for the Smiley-Hanson systems.

\smallskip

(O.i) \enspace $B^O(\phi \vee \psi)$ = $B^O(\phi) \vee B^O(\psi)$, 

(O.ii) \enspace $B^O(\neg \phi)$ = $\neg B^O(\phi)$, 

(O.iii) \enspace $B^O(\epsilon ab)$ = $p_a \wedge O(p_a \supset p_b) \wedge. p_b \supset O(p_b \supset p_a)$,

\smallskip 

\noindent where $p_a$ and $p_b$ are propositional variables corresponding to the name variables $a$ and $b$, respectively.

We may regard $\bf OK$ as $\bf K$. The other systems are stronger than $\bf OK$. Hence the following theorem is easily proved 
model theoretically. 

\begin{theo} 
For any of ten Smiley-Hanson systems, say $\bf SH$, and any formula $\phi$ of $\bf L_{1}$, we have 
$\vdash _{\bf TL_{1}} \phi \Rightarrow \mathrm{ } \vdash _{\bf SH} B^O(\phi)..$
\end{theo}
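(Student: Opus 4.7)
The plan is to reduce, via Theorems 2.4 and 5.1, to the single assertion that $B^O(\phi)$ is valid in every $\bf K$-model whenever $\phi$ is a theorem of $\bf L_1$. By Theorem 2.4 it is enough to prove $\vdash_{\bf L_1} \phi \Rightarrow \vdash_{\bf SH} B^O(\phi)$. The base system $\bf OK$ is syntactically $\bf K$ (with $O$ in place of $\Box$), and every other Smiley-Hanson system is obtained from $\bf OK$ by imposing additional conditions on the accessibility relation (seriality, transitivity, Euclideanness, almost reflexivity, or almost symmetry); hence the class of $\bf SH$-models is always a subclass of the class of $\bf K$-models. Combined with the completeness half of Theorem 5.1 for $\bf SH$, this reduces the task to showing that $B^O(\phi)$ is valid in every Kripke frame whenever $\vdash_{\bf L_1} \phi$.

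For this, I would induct on a Hilbert-style derivation of $\phi$ in $\bf L_1$. Because $B^O$ commutes with $\neg$ and $\vee$, classical tautologies translate to $\bf K$-validities and modus ponens is preserved at once. The only content is checking that the $B^O$-images of (Ax1), (Ax2), (Ax3) are $\bf K$-valid, which I would do by a direct semantic calculation at an arbitrary world $w$. For (Ax1), $B^O(\epsilon aa)$ is $\bf K$-equivalent to $p_a$, which is already a conjunct of $B^O(\epsilon ab)$. For (Ax3), firing the conditional $p_b \supset O(p_b \supset p_a)$ inside $B^O(\epsilon ab)$ with the conjunct $p_b$ of $B^O(\epsilon bc)$ produces the second conjunct of $B^O(\epsilon ba)$, and the third conjunct $p_a \supset O(p_a \supset p_b)$ is supplied for free by the conjunct $O(p_a \supset p_b)$ of $B^O(\epsilon ab)$.

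The main obstacle, and really the only nontrivial calculation, is (Ax2). At any world $w$ satisfying $B^O(\epsilon ab) \wedge B^O(\epsilon bc)$, the local truth of $p_b$ activates $p_b \supset O(p_b \supset p_a)$ to yield $O(p_b \supset p_a)$. Chaining $O(p_a \supset p_b)$ with $O(p_b \supset p_c)$ through the $K$-schema (A2) gives $O(p_a \supset p_c)$; under the further local hypothesis $p_c$, activating $p_c \supset O(p_c \supset p_b)$ and chaining with $O(p_b \supset p_a)$ yields $O(p_c \supset p_a)$. These deliver the second and third conjuncts of $B^O(\epsilon ac)$, while the first conjunct $p_a$ is already in $B^O(\epsilon ab)$. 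The additional axioms (A3)--(A7) of the stronger Smiley-Hanson systems are never invoked, since soundness of $B^O$ is monotone under strengthening of the frame class.
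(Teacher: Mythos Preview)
Your proof is correct and follows essentially the same reduction the paper makes: since $\mathbf{OK}$ is $\mathbf{K}$ and every other Smiley--Hanson system is an extension of $\mathbf{OK}$, soundness of $B^O$ for each $\mathbf{SH}$ follows immediately from soundness of $B$ for $\mathbf{K}$. The paper's entire argument is the one-line remark preceding the theorem (``We may regard $\mathbf{OK}$ as $\mathbf{K}$. The other systems are stronger than $\mathbf{OK}$. Hence the following theorem is easily proved model theoretically''), which simply invokes Blass's soundness result for $\mathbf{K}$ and monotonicity under system extension.

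The only difference is one of detail: the paper takes the $\mathbf{K}$-soundness of $B$ as already established in \cite{blass} and does not reprove it, whereas you spell out the verification of (Ax1)--(Ax3) explicitly. Your axiom checks are correct (in particular the chaining argument for (Ax2), where the key step is activating $p_b \supset O(p_b \supset p_a)$ via the local truth of $p_b$ before composing with $O(p_c \supset p_b)$). This makes your write-up self-contained, but it is not a different route---just a fuller exposition of the soundness half that the paper imports from Blass.
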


With the slight modification of the accessibility relations as above, we can obtain following theorem for 
the faithfullness of the adapted $B^O$-translation. 

\begin{theo} 
For any of ten Smiley-Hanson systems, say $\bf SH$, and any formula $\phi$ of $\bf L_{1}$, we have 
$\vdash _{\bf SH} B^O(\phi) \Rightarrow \mathrm{ } \vdash _{\bf L_{1}} \phi.$
\end{theo}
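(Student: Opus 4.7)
The strategy is to lift the Kripke countermodel of Theorem 4.1 to each of the ten Smiley-Hanson systems by augmenting its accessibility relation so as to satisfy the frame conditions defining the system, without disturbing the falsification of $B^O(\phi)$ at the root world $*$. Given $\phi$ not provable in $\bf TL_1$, I extract a Hintikka formula $\psi$ of $\phi$ exactly as in Theorem 4.1 and start from the corresponding model $\cal M = \langle G, R, V \rangle$ (with $O$ read in place of $\Box$).

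The key observation that underwrites the whole argument is that $B^O(\cdot)$ commutes with $\vee$ and $\neg$ and places the modal operator $O$ at exactly depth one over propositional variables. Hence the truth value of $B^O(\phi)$ at $*$ depends only on the valuation at $*$ and at the immediate $R$-successors of $*$, and not on any other edges of $R$. In particular, any enrichment $R^* \supseteq R$ that neither adds edges emanating from $*$ nor changes the valuation at $*$ or at the $g_i$'s leaves the Case 1--7 analysis of Theorem 4.1 intact (with $O$ substituting for $\Box$), so that $* \not\models B^O(\psi)$, and hence $* \not\models B^O(\phi)$ by Theorem 2.5.

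Concretely, define $R^*$ from $R$ by adding: the self-loops $(g_i, g_i)$ for $1 \leq i \leq n$ whenever almost reflexivity is needed (for $\bf OM$, $\bf OS4$, $\bf OB$ and their $+$ variants); the full clique $\{(g_i, g_j) : 1 \leq i, j \leq n\}$ whenever transitivity together with the Euclidean condition is needed (for $\bf OS5$ and $\bf OS5^+$); and a self-loop $(g_i, g_i)$ to supply seriality for $\bf OK^+$. One then verifies by a short case check that $R^*$ satisfies the conjunction of (AR1)--(AR5) required by $\bf SH$. The degenerate case $n = 0$ is handled analogously by patching $\cal M_{\rm 0}$ with $(g, g)$.

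The main obstacle is guaranteeing that no combination of the frame conditions (Euclidean with transitivity, or almost symmetry with almost reflexivity, etc.) forces some further edge whose target is $*$, which would change the $R^*$-successor set of $*$ and invalidate the reuse of Theorem 4.1's case analysis. I head this off by noting that $*$ is never a target of any edge in $R^*$, and that the conditions in (AR1)--(AR5) are all universal Horn implications whose conclusions assert edges whose sources are already some $g_i$ or some $\eta \in G_\omega$; so iterated closure under these conditions stays within $\{g_1, \ldots, g_n\} \cup G_\omega$ and never reaches back to the root. With this invariant established, the completeness half of Theorem 5.1 delivers $\not\vdash_{\bf SH} B^O(\phi)$, completing the proof.
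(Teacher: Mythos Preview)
Your approach coincides with the paper's: augment the relation $R$ of Theorem~4.1 with self-loops $(g_i,g_i)$ (or, for $\mathbf{OS5}$ and $\mathbf{OS5^+}$, the full clique on the $g_i$), observe that no new edge leaves $*$ so the depth-one falsification at $*$ is unchanged, and invoke completeness. The concrete relations you write down are correct and are essentially those the paper uses.

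Your final paragraph, however, contains a mis-statement worth fixing. The claim that the Horn conclusions of (AR1)--(AR5) always have source in $\{g_1,\ldots,g_n\}\cup G_\omega$ fails for transitivity: (AR2) applied to $*Rg_i$ and $g_iRg_j$ yields $*Rg_j$, an edge whose source is $*$. (Relatedly, an edge with \emph{target} $*$ would not alter the successor set of $*$; the danger would be a new edge with \emph{source} $*$.) What actually protects you is simply that in each of your $R^*$ the successor set of $*$ is already all of $\{g_1,\ldots,g_n\}$, so transitivity through the $g$-layer can add nothing new out of $*$. Drop the closure-invariant argument and rely on the explicit case check you already announce; that is exactly what the paper does.
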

\begin{proof} Let $\bf SH$ be one of ten Smiley-Hanson systems. Let $\phi$ be a formula of $\bf L_{1}$. 
If the number of chain $n \geq 1$ holds, we take, instead of the original $G$ and $R$ in the model construction of Theorem 4.1, 

\smallskip 

\noindent ($\mathrm{M^{O}}$1) $G^O = \{*, g_1, \dots , g_n\} \cup G_{\omega}$. 

\smallskip 

\noindent where $G_{\omega} = \{\eta_{j}: j \in \omega\}$, $Card(G_{\omega}) = \aleph_0$, 
$\{*, g_1, \dots , g_n\} \cap G_{\omega} = \emptyset$ and $*, g_1, \dots , g_n$ are distinct.

\noindent ($\mathrm{M^{O}}$2) 

$$R^O = \left\{\begin{array}{lll} BR \cup \{(g_i, g_j): i \neq j, 1 \leq i \leq n, 1 \leq j \leq n \} & 
\mbox{if $\bf SH$ is } 
\\ $$ & \mbox{\enspace \enspace \enspace $\bf OS5$ or $\bf OS5^{+}$}
\\ BR \cup \{(g_i, g_i): 1 \leq i \leq n \} & \mbox{otherwise.}
\end{array} \right.$$
\noindent where $BR = \{(*, g_i ) : 1 \leq i \leq n\} \cup \{(\eta, \eta) : \eta \in  G_{\omega} \}$.

If $n = 0$ holds, we take $G^O_0 = \{*, g\} \cup G_{\omega}$ as  ($\mathrm{M^{O}}$1). 
$$R^O_0 = \left\{\begin{array}{lll} \{(*, g) \} \cup \{(\eta, \eta) : \eta \in G_{\omega} \}. & 
\mbox{if $\bf SH$ is } 
\\ $$ & \mbox{\enspace \enspace \enspace $\bf OS5$ or $\bf OS5^{+}$}
\\ \{(*, g), (g, *), (g, g) \} \cup \{(\eta, \eta) : \eta \in G_{\omega} \}.  & \mbox{otherwise.}
\end{array} \right.$$

We shall take the same assignment as that for Theorem 4.1. The above modification of the accessibility relations 
does not effect the truth of $* \not\models B^O(\phi)$, since the additional parts of relations do not related to $*$. 
Hence we prove this theorem with the same procedures of Theorem 4.1. We remark that we can take the following relation 
$$R^{Oc} = BR \cup \{(g_i, g_j): 1 \leq i \leq n, 1 \leq j \leq n \}$$
in place of $R^O$.  Still $R^{Oc}$ is Euclidean and it satisfies all other relations for eight deontic logics.
\end{proof}

From the above theorem, 

\begin{coro} 
For any of ten Smiley-Hanson systems, say $\bf SH$, and any formula $\phi$ of $\bf L_{1}$, $B$-translation is an 
embedding of $\bf L_{1}$ in $\bf SH$.
\end{coro}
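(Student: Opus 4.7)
The plan is simply to combine the two preceding theorems of this section with the tableau-completeness result of \S2. By "embedding" we mean that the translation $B^O$ is both sound and faithful with respect to $\bf SH$, i.e.\ that for every formula $\phi$ of $\bf L_1$ one has $\vdash_{\bf L_1} \phi \Leftrightarrow \vdash_{\bf SH} B^O(\phi)$. So the work is to verify the two directions separately, and in fact each direction has already been prepared for us.

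For the forward implication (soundness of $B^O$ over $\bf SH$), I would start from $\vdash_{\bf L_1} \phi$, invoke Theorem 2.4 to obtain $\vdash_{\bf TL_1} \phi$, and then apply Theorem 5.2, which gives $\vdash_{\bf SH} B^O(\phi)$ directly. For the reverse implication (faithfulness) I would cite Theorem 5.3: from $\vdash_{\bf SH} B^O(\phi)$ it yields $\vdash_{\bf L_1} \phi$ at once.

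Since both implications are obtained by a single citation each, there is no genuine obstacle here; the corollary is essentially a bookkeeping consequence of the model-construction argument carried out in the proof of Theorem 5.3 (which in turn re-uses the Hintikka-formula construction of Theorem 4.1, modified only in the accessibility relation so as to remain compatible with each of the ten Smiley-Hanson systems). The only point worth flagging is the need to state the equivalence in both directions uniformly for every $\bf SH$ in the list $\bf OK$, $\bf OM$, $\bf OS4$, $\bf OB$, $\bf OS5$, $\bf OK^{+}$, $\bf OM^{+}$, $\bf OS4^{+}$, $\bf OB^{+}$, $\bf OS5^{+}$; this is automatic because Theorem 5.3 has already been proved uniformly for all ten, and Theorem 5.2 is likewise stated for arbitrary $\bf SH$ among them.
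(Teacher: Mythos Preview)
Your proposal is correct and takes essentially the same approach as the paper, which simply records the corollary as following ``From the above theorem'' (i.e., Theorem~5.3). You have merely made explicit what the paper leaves to the reader: that the embedding requires both the soundness direction (Theorem~5.2 together with Theorem~2.4) and the faithfulness direction (Theorem~5.3).
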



\section{A general theorem on the faithfulness}

First, as usual, a \it normal modal logic $X$  \rm is defined as follows:

\smallskip

(nor.1) $\bf K$ $\subseteq X$, 

(nor.2) $X$ is closed under modus ponens, substitution and the rule of necessitation (i.e., $\vdash \phi$ implies $\vdash \Box \phi$).

\smallskip

We may say that normal modal logics are extensions of $\bf K$.

In this section, we shall give a general theorem on the faithfulness of $B$-translation with respect to normal modal logics with transitive frames 
or irreflexive ones. 
\begin{theo}
Let $X$ be a normal modal logic.
Let $I$ be an index set such that $\omega \subseteq I$. Suppose that $X$ is complete with respect to a set of Kripke frames, 
say $F$ = $\{(G_\alpha, R_\alpha) : \alpha \in I\}$ where for any $\alpha \in I$, $G_\alpha$ is a nonempty infinite set and $R_\alpha$ 
is an accessibility relation on $G_\alpha$. Let $H$ be a set of Kripke frames such that

\smallskip

$(6.1.i)$ $H$ = $\{(G_i^*, R_i^*) : i \in \omega\}$,

$(6.1.ii)$ For any $i \in \omega$, 
$G_i^* = \{*, g_0, g_1, \dots , g_i\} \cup G_{\omega i},$

\smallskip 

\noindent where $G_{\omega i} = \{\eta_{ij}: j \in \omega\}$, $Card(G_{\omega i}) = \aleph_0$, 
$\{*, g_1, \dots , g_n\} \cap G_{\omega i} = \emptyset$ and $*, g_1, \dots , g_n$ are distinct and 
$R_i^* = \{(*, g_j): 0 \leq j \leq i\} \cup \{(\eta, \eta) : \eta \in G_{\omega} \}. $
If $H \subseteq F$ holds, then 
$B$-translation is faithful with respect to $X$.
\end{theo}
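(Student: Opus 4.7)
The plan is to adapt the proof of Theorem 4.1 almost verbatim, replacing the ad hoc Kripke model constructed there with the appropriately indexed frame from the family $H$ supplied by the hypothesis. First I would argue by contraposition: assuming $\not\vdash_{\bf L_1} \phi$, I would build a Kripke countermodel for $B(\phi)$ whose underlying frame is one of the $(G_i^*, R_i^*) \in H$, and since $H \subseteq F$ and $X$ is complete with respect to $F$, this delivers $\not\vdash_X B(\phi)$. By Theorem 2.4 the nonprovability of $\phi$ in $\bf L_1$ yields a normal tableau with some open branch ending in a Hintikka formula $\psi$ of which $\phi$ is a positive part; by Theorem 2.5, applied to $B(\psi)$ after treating each $B(\epsilon ab)$ as atomic, it will suffice to falsify every atomic positive part $B(D_k)$ of $B(\psi)$ and satisfy every atomic negative part $B(E_k)$ of $B(\psi)$ at a distinguished world.

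Second, I would read off the number of chains $n = |ChainQ(\psi)|$ from $\psi$. If $n \geq 1$, I would choose the frame $(G_{n-1}^*, R_{n-1}^*) \in H$, in which the root $*$ accesses exactly the $n$ worlds $g_0, g_1, \dots, g_{n-1}$, with the isolated reflexive padding $G_{\omega, n-1}$ disjoint from the $R_{n-1}^*$-reach of $*$. If $n = 0$, I would choose $(G_0^*, R_0^*) \in H$, giving a single successor $g_0$ of $*$. After the trivial reindexing $g_j \mapsto g_{j+1}$, the accessibility structure local to $*$ is precisely the one constructed in clauses (M1)--(M2) of Theorem 4.1, which is all that matters for the verification.

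Third, I would transport the valuation (M3)(i)--(v) of Theorem 4.1 verbatim to these frames, keeping the unit-matrix assignment on the chain representatives and the tail assignment unchanged. The entire case analysis (Case 1)--(Case 7) establishing $(\dagger)$ and $(\dagger\dagger)$ goes through without modification, because each of those seven arguments inspects only the truth-values on the successors of $*$ and never invokes transitivity, reflexivity, symmetry, or any other global property of $R$; the padding worlds of $G_{\omega, n-1}$ remain unreachable from $*$ and hence are harmless for the truth of $B(\psi)$ at $*$. Applying Theorem 2.5 to $B(\psi)$ with $B(\phi)$ as a positive part then yields $\not\models B(\phi)$ at $*$, and completeness of $X$ with respect to $F$ concludes.

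The main obstacle is a bookkeeping one rather than a conceptual one: I must verify carefully that every individual subclause of the verification $(\dagger)$--$(\dagger\dagger)$ in Theorem 4.1 is in fact local to $*$ and its $n$ chosen successors, and that no step covertly appeals to any additional frame property. Once that locality check is confirmed, the responsibility for the ambient modal axioms is automatically shifted onto the hypothesis $H \subseteq F$, and the theorem follows immediately without any new model-theoretic work.
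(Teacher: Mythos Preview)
Your proposal is correct and follows exactly the paper's approach: the paper's own proof simply says to reuse the model construction from Theorem 4.1, and you have spelled out precisely how that reuse works (choosing the frame $(G_{n-1}^*,R_{n-1}^*)$ so that $*$ has the right number of successors, transporting the valuation (M3), and noting that the verification (Case 1)--(Case 7) is local to $*$ and its successors). The reindexing and the $n=0$ handling you flag are the only bookkeeping details, and you have them right.
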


\begin{proof} Suppose the assumption and the condition of the theorem to be proved. 
Then we can take the same model-construction in the proof of $(\clubsuit)$ in \S4 
in order to prove the desired theorem. 
\end{proof}

\begin{defi} By $\bf ExtK_{tirr}$, we denote the set of all normal logics such that they are 
elements of $\bf ExtK$ and they are complete w.r.t. a set of transitive or irreflexive 
Kripke frames.
\end{defi}

\begin{coro} Let $X \in \mathbf{ExtK_{tirr}}$. $B$-translation is faithful 
with respect to $X$.
\end{coro}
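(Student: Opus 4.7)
The plan is to derive the corollary as an immediate application of Theorem 6.1 by verifying, for any $X \in \mathbf{ExtK_{tirr}}$, that the frame set $H$ appearing in the hypothesis of Theorem 6.1 embeds into the frame class $F$ that characterizes $X$. Since $X$ is complete with respect to a set $F$ of transitive or irreflexive Kripke frames, it will suffice to show that each frame $(G_i^*, R_i^*) \in H$ belongs to $F$ (perhaps after a harmless modification of $R_i^*$ on the $\eta$-part that does not affect the model-construction of Theorem 6.1).

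First I would handle the transitive case. The relation $R_i^*$ consists only of the arrows $(*, g_j)$ with $0 \leq j \leq i$ and of the loops $(\eta,\eta)$ for $\eta \in G_{\omega i}$. Each $g_j$ has no outgoing $R_i^*$-arrow, so there is no two-step path that begins at $*$, and transitivity at $*$ holds vacuously; on $G_{\omega i}$ the loops trivially compose with themselves, and $*$ is never an endpoint of a loop. Hence every frame in $H$ is transitive, so $H \subseteq F$ whenever $F$ is a class of transitive frames, and Theorem 6.1 applies directly.

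Next I would handle the irreflexive case. Here $H$ as defined in Theorem 6.1 is not literally irreflexive, because of the loops at the $\eta$-points. However, the $\eta$-points are not $R_i^*$-reachable from $*$, and the formula $B(\phi)$ has modal depth exactly one, so the truth value of $B(\phi)$ at $*$ depends only on the worlds one $R_i^*$-step from $*$, namely the $g_j$. Consequently I can replace the loops $(\eta,\eta)$ by, say, the empty relation on $G_{\omega i}$; the resulting frames are irreflexive, and the model construction of the proof of Theorem 6.1 goes through unchanged, since only $*$ and the $g_j$ participate in the verification of $(\dagger)$ and $(\dagger\dagger)$. After this harmless modification, $H \subseteq F$ holds in the irreflexive case as well, and the conclusion follows.

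The one step that requires a comment rather than a one-line verification is precisely the irreflexive case, because it forces the observation that the $\eta$-component of $H$ is cosmetic: it is present only to keep a uniform formulation of the ambient frame and plays no role in the falsification of $B(\phi)$ at $*$. Once this is spelled out, the corollary reduces to a direct invocation of Theorem 6.1 in both cases.
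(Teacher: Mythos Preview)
Your approach coincides with the paper's: both invoke Theorem 6.1, and the paper's proof is literally the single sentence ``From Theorem 6.1.'' Your write-up is therefore considerably more detailed. In particular, you correctly verify that every frame in $H$ is transitive, and you rightly observe that the frames in $H$ are \emph{not} irreflexive because of the loops $(\eta,\eta)$ on $G_{\omega i}$; your remedy---drop those loops, noting that the $\eta$-worlds are unreachable from $*$ and play no role in the verification of $(\dagger)$ and $(\dagger\dagger)$---is sound and matches adjustments the paper itself makes elsewhere (Corollary 6.6 takes $G_{\omega} = \emptyset$ for $\mathbf{PrL}$, and Theorem 7.12 formalizes dropping $G_{\omega i}$ under the finite model property). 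So your argument is correct and, on the irreflexive case, more careful than the paper's one-line justification.
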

\begin{proof}
From Theorem 6.1.
\end{proof}

\begin{coro} Let $X \in \mathbf{ExtK_{tirr}}$. $B$-translation is an embedding of $\bf L_1$ in $X$. 
\end{coro}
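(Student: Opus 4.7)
The plan is to combine the faithfulness direction already supplied by Corollary 6.3 with the (easy) soundness direction of $B$-translation. Recall that an embedding of $\bf L_1$ in $X$ means the biconditional
\[
\vdash_{\bf L_1}\phi \iff \vdash_X B(\phi),
\]
so I would split the argument into the two implications and handle each separately.

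For the right-to-left direction, I would simply invoke Corollary 6.3, which gives exactly $\vdash_X B(\phi) \Rightarrow \vdash_{\bf L_1}\phi$ for any $X \in \mathbf{ExtK_{tirr}}$. No further work is needed here; the machinery of Hintikka formulas and the model construction from Theorem 4.1, as generalized in Theorem 6.1, does the job.

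For the left-to-right (soundness) direction, I would first observe that Blass's original result, together with the informal remark in the introduction that $B(\cdot)$ is sound, establishes $\vdash_{\bf L_1}\phi \Rightarrow \vdash_{\bf K} B(\phi)$. Concretely one verifies by induction on $\bf L_1$-derivations: for the classical propositional basis one uses that $B$ commutes with $\vee$ and $\neg$, and for the three axiom-schemata (Ax1), (Ax2), (Ax3) one checks that their $B$-translations are tautologies of $\bf K$ (these are the standard Blass verifications, purely propositional modal reasoning inside $\bf K$). Then, since $X$ is a normal modal logic we have $\bf K \subseteq X$ by clause (nor.1) of the definition, so $\vdash_{\bf K} B(\phi) \Rightarrow \vdash_X B(\phi)$. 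Chaining the two implications yields soundness of $B$-translation with respect to $X$.

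Putting the two directions together gives the biconditional, i.e., the embedding. The main ``obstacle'' is really only a bookkeeping one: one must confirm that the soundness verifications performed inside $\bf K$ transfer unchanged to $X$, which is immediate because $X$ extends $\bf K$ and the derivations use only the normal-modal rules (modus ponens and necessitation) and $\bf K$-axioms that are already present in $X$ by (nor.1) and (nor.2). Hence no additional frame-theoretic argument beyond Theorem 6.1 and Corollary 6.3 is required, and the corollary follows at once.
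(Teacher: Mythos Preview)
Your argument is correct and matches the paper's approach: the paper's own proof is the single line ``From Corollary 6.3,'' tacitly taking soundness for granted (since $\mathbf{K}\subseteq X$ for any normal $X$), while you spell out both the faithfulness and soundness directions explicitly. There is no substantive difference; your version simply makes the implicit soundness step visible.
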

\begin{proof}
From Corollary 6.3.
\end{proof}

As a direct consequece of the above corollary, we have the following.

\begin{coro}
$B$-translation is an embedding of $\bf L_1$ in $\bf K4$.
\end{coro}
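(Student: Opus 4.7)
The plan is to derive this as an immediate application of Corollary 6.4, by showing that $\mathbf{K4} \in \mathbf{ExtK_{tirr}}$. First I would recall that $\mathbf{K4}$ is the normal modal logic obtained from $\mathbf{K}$ by adding the axiom $\Box \phi \supset \Box \Box \phi$, and that by a well-known result (e.g.\ in Chagrov and Zakharyaschev \cite{cz} or Hughes and Cresswell \cite{hc}) $\mathbf{K4}$ is sound and complete with respect to the class of all transitive Kripke frames. Let $F$ denote this class. Then by definition $\mathbf{K4} \in \mathbf{ExtK_{tirr}}$, provided only that $\mathbf{K4}$ fits the framework of Theorem 6.1, i.e.\ that there is a witnessing set of frames $H$ of the prescribed shape with $H \subseteq F$.

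So the next step is to verify that the auxiliary family $H = \{(G_i^*, R_i^*) : i \in \omega\}$ appearing in the hypothesis of Theorem 6.1 consists entirely of transitive frames, hence lies inside $F$. This is a short calculation: each $R_i^*$ is the union of the ``fan'' $\{(*, g_j) : 0 \leq j \leq i\}$ with the reflexive loops $\{(\eta,\eta) : \eta \in G_{\omega i}\}$. A composition $x R_i^* y$ and $y R_i^* z$ can only arise with $y \in G_{\omega i}$ (otherwise $y$ has no outgoing edge), in which case $y = z = x$, and $x R_i^* x$ is immediate from the reflexive loops. Transitivity is therefore automatic, and $H \subseteq F$.

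Having verified both conditions, I would conclude by invoking Corollary 6.4 directly: since $\mathbf{K4} \in \mathbf{ExtK_{tirr}}$, $B$-translation is an embedding of $\mathbf{L_1}$ in $\mathbf{K4}$. There is no real obstacle here --- the content of the proof is bookkeeping confirming that the hypotheses of the general machinery developed in \S 6 apply to $\mathbf{K4}$; the work was already done inside Theorem 6.1 and Corollary 6.4. The only point at which a mistake could plausibly creep in is the transitivity verification for $R_i^*$, where one must remember that the $g_j$'s have no outgoing edges so that no nontrivial two-step path through them exists.
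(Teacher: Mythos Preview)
Your approach is correct and matches the paper's, which simply observes that $\mathbf{K4}$ is complete with respect to transitive frames and then invokes Corollary~6.4; your extra verification that each $R_i^*$ is transitive is exactly what the paper leaves implicit in the one-line proof of Corollary~6.3. One small imprecision in that verification: the parenthetical ``otherwise $y$ has no outgoing edge'' is false when $y = *$, since $*$ does have outgoing edges to the $g_j$; the correct reason the case $y = *$ is ruled out is that no world has an edge \emph{into} $*$, so $x R_i^* *$ never occurs.
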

\begin{proof}
$\bf K4$ is stronger than $\bf K$. And it is complete with respect to transitive frames. Thus we obtain the soundness and 
faithfulness of $B$ from Corollary 6.4. 
\end{proof}

$\bf K4$ is a subsystem of the provability logic $\bf PrL$ (= $\bf K4$ $+$ $\Box (\Box \phi \supset \phi) \supset \Box \phi$). 
The subsystem is also called $\bf BML$. $\bf PrL$ is also characterized by 
$\bf K$ $+$ $\Box (\Box \phi \supset \phi) \supset \Box \phi$ 
(see \cite{cz}). 

We know that $\phi$ is a theorem of $\bf PrL$ iff $\phi$ is valid in all finite transitive and irreflexive frames 
(see $\rm e.g.$ Boolos \cite{boolos}; $\bf PrL$ is denoted by $\bf GL$ in \cite{boolos}, also see Carnielli and Pizzi \cite{carpi}). 
(Frames $(W, R)$ in which $W$ is finite and $R$ is irreflexive and transitive are called \it strict partial orders.\rm ) Thus we have:

\begin{coro} 
$B$-translation is an embedding of $\bf L_1$ in $\bf PrL$.
\end{coro}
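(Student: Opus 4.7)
The plan is to adapt the Kripke-model construction from the proof of Theorem~4.1 so that it produces a countermodel living on a finite strict partial order, which is exactly the class of frames characterizing $\bf PrL$ (by the Boolos characterization cited just above the corollary). Soundness $\vdash_{\bf L_1}\phi \Rightarrow \vdash_{\bf PrL} B(\phi)$ is immediate, since $B$-translation is sound for $\bf K$ by Corollary~4.2 and $\bf K \subseteq \bf PrL$. For faithfulness, one would like to invoke Corollary~6.4 directly, since $\bf PrL$ is a normal extension of $\bf K$ complete with respect to a set of transitive frames (the strict partial orders); but the frame family $H$ used in Theorem~6.1 carries self-loops at every $\eta \in G_\omega$ and therefore fails to be irreflexive, so its members are not $\bf PrL$-frames, and the inclusion $H \subseteq F$ cannot be arranged when $F$ is the frame class for $\bf PrL$. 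I therefore proceed directly, by a small surgery on the model of Theorem~4.1.

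Concretely, given $\not\vdash_{\bf TL_1}\phi$, I pick a Hintikka formula $\psi$ of $\phi$ and let $n$ be the number of its chains in $ChainQ(\psi)$. For $n \geq 1$, take $G = \{*, g_1, \dots, g_n\}$ and $R = \{(*, g_i) : 1 \leq i \leq n\}$; for $n = 0$, take $G = \{*, g\}$ and $R = \{(*, g)\}$. The valuation $V$ is copied verbatim from clauses (M3)(i)--(iv) in the proof of Theorem~4.1, simply dropping the $G_\omega$-clause~(v). This frame is finite, irreflexive (no self-loops at all), and trivially transitive (every successor of $*$ has no outgoing $R$-edge), so it is a strict partial order, hence a $\bf PrL$-frame. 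Because $*$ reaches no world outside $\{g_1, \dots, g_n\}$ (or $\{g\}$), the Cases~1--7 in the proof of Theorem~4.1 carry over \emph{word for word} to yield $* \not\models B(\phi)$; completeness of $\bf PrL$ with respect to strict partial orders then gives $\not\vdash_{\bf PrL} B(\phi)$, which together with Theorem~2.4 supplies the faithfulness half of the embedding.

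The main (and essentially only) obstacle is the one just described: the $G_\omega$-component of the Theorem~4.1 model was inserted purely for generality and is not irreflexive, so it cannot be reused inside a $\bf PrL$-frame. Once one observes that this component is also invisible from $*$ and therefore plays no role in the falsification argument, the whole proof collapses to a restriction of what is already verified in Theorem~4.1, and no fresh case analysis is required. The same template manifestly works for any normal modal logic characterized by transitive irreflexive frames, so Corollary~6.6 may equivalently be viewed as a direct instance of a mild sharpening of Corollary~6.4 in which the $G_\omega$-padding is omitted.
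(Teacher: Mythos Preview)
Your proof is correct and follows essentially the same approach as the paper: set $G_\omega = \emptyset$ in the Theorem~4.1 construction, observe that the resulting frames are finite, irreflexive and transitive (hence $\bf PrL$-frames), and note that the Cases~1--7 verification is unaffected since $*$ never accessed $G_\omega$ anyway. You are more explicit than the paper about why Theorem~6.1/Corollary~6.4 cannot be invoked verbatim (the self-loops on $G_\omega$ violate irreflexivity), whereas the paper simply asserts that one ``may take $G_\omega = \emptyset$'' and then cites~6.1 and~6.4 somewhat loosely; but the underlying argument is the same.
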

\begin{proof}
In this case, we may take $G_{\omega} = \emptyset$ in the model construction of the proof of Therem 4.1. So we can do all in 
finite models. From Theorem 6.1. and Cororllary 6.4, our corollary holds, since our accessibility relations are all finte transitive and irreflexive. 
\end{proof}


\section{Further general results for the faithfulness}

Considering the proof of Theorems 4.1, 5.3 and 6.1, we can obtain further general theorems for the faithfulness 
of $B$-translation and their applications.

\begin{theo}
Let $X$ be a normal modal logic. 
Let $I$ be an index set such that $\omega \subseteq I$. Suppose that $X$ is complete with respect to a set of Kripke frames, 
say $F$ = $\{(G_\alpha, R_\alpha) : \alpha \in I\}$, 

\noindent where for any $\alpha \in I$, $G_\alpha$ is a nonempty infinite set and $R_\alpha$ 
is an accessibility relation on $G_\alpha$. Let $H$ be a set of Kripke frames such that

\smallskip

$(7.1.i)$ $H$ = $\{(G_i^*, R_i^*) : i \in \omega\}$,


$(7.1.ii)$ For any $i \in \omega$, 
$G_i^* = \{*, g_0, g_1, \dots , g_i\} \cup G_{\omega i},$
\noindent where $G_{\omega i} = \{\eta_{ij}: j \in \omega\}$, $Card(G_{\omega i}) = \aleph_0$, 
$\{*, g_1, \dots , g_n\} \cap G_{\omega i} = \emptyset$ and $*, g_1, \dots , g_n$ are distinct and 
$$R_i^* = \{(*, g_j): 0 \leq j \leq i\} \cup \{(g_j, g_k): j \neq k, 1 \leq j \leq i, 1 \leq k \leq i \}$$ $$ \cup \{(\eta, \eta) : \eta \in G_{\omega i} \}. $$
If $H \subseteq F$ holds, then 
$B$-translation is faithful with respect to $X$.
\end{theo}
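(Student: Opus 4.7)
The plan is to carry out essentially the same model-theoretic argument as in the proof of Theorem 4.1 (and its abstraction in Theorem 6.1), relying on the pivotal observation that each $B(\epsilon ab) = p_a \wedge \Box(p_a \supset p_b) \wedge.\, p_b \supset \Box(p_b \supset p_a)$ has modal depth exactly $1$, with every $\Box$ having purely propositional scope. Consequently, when $B(\phi)$ is evaluated at $*$, only (i) the propositional valuation at $*$, (ii) the edges leaving $*$, and (iii) the propositional valuation at worlds directly $R_i^*$-accessible from $*$ can affect truth; no deeper level of accessibility is probed.

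First, I would proceed contrapositively: assume $\phi$ is not provable in $\bf TL_1$ and, by Theorem 2.2, fix a normal open tableau producing a Hintikka formula $\psi$ of $\phi$. Let $n = |ChainQ(\psi)|$. Since $n \in \omega \subseteq I$, the hypothesis $H \subseteq F$ gives the frame $(G_n^*, R_n^*) \in F$, which I use as the underlying frame of the counter-model. The valuation $V$ on $\{*, g_1, \dots, g_n\}$ is defined exactly by clauses (M3)(i)--(v) of Theorem 4.1 (taking chain representatives $c_i$ and tail-assignments via the unit matrix $U$); the additional world $g_0$ and the worlds of $G_{\omega n}$ receive arbitrary assignments (say, $V(p,g_0)=0$ and $V(p,\eta)=1$), which will not affect the argument. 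If $n=0$, one uses the same degenerate model as in the $n=0$ case of Theorem 4.1, built from the frame for $i = 0$.

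Second, I would verify $(\dagger)$ and $(\dagger\dagger)$ from Theorem 4.1: every atomic positive part $D_k$ of $\psi$ satisfies $* \not\models B(D_k)$, and every atomic negative part $E_k$ satisfies $* \models B(E_k)$. The case analysis (Cases 1--7) transfers verbatim. The key point is that the new edges $(g_j, g_k)$ for $j \neq k$ present in $R_n^*$ but absent from the frame used in Theorem 6.1 do \emph{not} influence the truth of $\Box(p_a \supset p_b)$ at $*$, because that formula merely requires $p_a \supset p_b$ to hold pointwise at each $g_j$ --- a purely propositional condition at each successor world. Once $* \not\models B(\psi)$ is obtained, Theorem 2.5 yields $* \not\models B(\phi)$, and completeness of $X$ with respect to $F$ gives $\not\vdash_X B(\phi)$.

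The only real conceptual obstacle is justifying that the richer accessibility structure on the ``inner'' worlds $g_1, \dots, g_n$ is genuinely harmless; this is the special feature of $B$-translation --- the total absence of nested modalities --- that was already exploited in the Smiley-Hanson case (Theorem 5.3, particularly in the $\bf OS5$/$\bf OS5^+$ subcase where edges $(g_i,g_j)$ for $i\neq j$ were added). Once this modal-depth observation is in hand, the rest is a bookkeeping recapitulation of the Theorem 4.1 verification applied to the slightly enlarged frames supplied by $H$.
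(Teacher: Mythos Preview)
Your proposal is correct and follows essentially the same route as the paper, whose proof of Theorem 7.1 consists of the single line ``From Theorem 4.1, 5.3 and 6.1''; you have simply unpacked what that citation means, using the model construction of Theorem 4.1 on the richer frames supplied by $H$ and invoking the observation (implicit in the $\bf OS5$/$\bf OS5^{+}$ case of Theorem 5.3) that the extra edges among the $g_j$'s are invisible from $*$. Your explicit appeal to the modal depth of $B(\epsilon ab)$ being $1$ is in fact a cleaner justification of that invisibility than the paper's brief remark that ``the additional parts of relations do not relate to $*$''.
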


\begin{proof} 
From Theorem 4.1, 5.3 and 6.1.
\end{proof}

Theorem 7.1 is applicable to normal modal logics with serial or irreflexive or Euclidean or almost symmetric Kripke frames.

\begin{theo}
Suppose that we take the same assumptions about $X$, $I$, $F$, $H$, $G_i^*$, $G_{\omega i}$ \rm (\it for any $i \in \omega$\rm) \it in Theorem 7.1 with  
$$R_i^* = \{(*, g_j): 0 \leq j \leq i\} \cup \{(g_j, g_j): 1 \leq j \leq i \} \cup \{(\eta, \eta) : \eta \in G_{\omega i} \}. $$
If $H \subseteq F$ holds, then 
$B$-translation is faithful with respect to $X$.
\end{theo}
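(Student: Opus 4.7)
The plan is to re-run the model construction of Theorem 4.1 nearly verbatim, now based on the frame $(G_n^*, R_n^*) \in H$ (where $n$ is the number of chains of a Hintikka formula $\psi$ of a given unprovable $\phi$), and to argue that the two modifications of the accessibility relation — the added self-loops $(g_j,g_j)$ at $g_1,\dots,g_n$ and the extra successor $g_0$ of $*$ — do not disturb the falsification of $B(\phi)$ at the root $*$. Concretely, we shall assume $\not\vdash_{\bf L_1}\phi$; Theorem 2.4 then supplies a Hintikka formula $\psi$ of $\phi$, and we let $n$ be the number of chains of $\psi$. The valuation $V$ on the worlds $g_1,\dots,g_n$ will be defined exactly as in clauses (M3).(i)--(v) of the proof of Theorem 4.1 (unit-matrix valuation for chain representatives, tail-indexed valuation for tails, zero elsewhere), while on the auxiliary world $g_0$ we shall set $V(p, g_0) = 0$ for every propositional variable $p$, and on $G_{\omega n}$ we shall set $V(p, \eta) = 1$.

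The key observation is that $*$ occurs only as the \emph{source} of edges in $R_n^*$ (namely the edges to $g_0, g_1, \dots, g_n$); the new self-loops all originate at some $g_j$ and therefore do not enlarge the $R_n^*$-successor set of $*$. Since every $\Box$ occurring in $B(\epsilon ab)$ is anchored at $*$ and is never iterated — the translation nests modalities only to depth one — the truth value at $*$ of $\Box\theta$ for any propositional $\theta$ is determined entirely by the propositional values of $\theta$ at $g_0, g_1, \dots, g_n$. The computation at $g_1, \dots, g_n$ is identical to that in the proof of Theorem 4.1, and at $g_0$ the conditional $p_a \supset p_b$ is vacuously true because $V(p_a, g_0) = 0$; hence the new successor $g_0$ contributes only true conjuncts to the $\Box$-claims and never salvages any claim that was false without it.

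Consequently the seven-case verification of $(\dagger)$ and $(\dagger\dagger)$ from the proof of Theorem 4.1 will carry through unchanged, giving $* \not\models B(\phi)$ in a model based on $(G_n^*, R_n^*) \in H \subseteq F$. Completeness of $X$ with respect to $F$ then yields $\not\vdash_X B(\phi)$, which is the desired faithfulness; the $n=0$ case is handled exactly as the $n=0$ branch of Theorem 4.1, since $V(p_a, *) = 0$ already falsifies the leftmost conjunct $p_a$ of each $B(\epsilon ab)$. The main obstacle one might worry about is that a reflexive loop at $g_j$ could spoil $(\dagger\dagger)$ by demanding an inner $\Box$-check at $g_j$; but because $B(\cdot)$ is only one modal layer deep and $*$ is the sole world of evaluation, this never occurs, so the apparent obstacle is cosmetic and the argument reduces to a mechanical inspection that mirrors Theorem 4.1 and Theorem 7.1.
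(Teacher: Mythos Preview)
Your proposal is correct and follows essentially the same approach as the paper. The paper's own proof is a one-line reference to Theorems 4.1, 5.3 and 6.1; your argument simply unpacks what that reference means --- in particular, your ``key observation'' that the added self-loops $(g_j,g_j)$ do not emanate from $*$ and hence cannot alter the truth value of any depth-one $\Box$-formula evaluated at $*$ is precisely the point made in the proof of Theorem 5.3 (``the additional parts of relations do not relate to $*$''). Your explicit treatment of $g_0$ as an auxiliary all-false world is a harmless and correct way to reconcile the indexing of $G_i^*$ (which starts at $g_0$) with the chain-indexed worlds $g_1,\dots,g_n$ of Theorem 4.1; the paper glosses over this bookkeeping entirely.
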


\begin{proof} 
From Theorems 4.1, 5.3 and 6.1.
\end{proof}

Theorem 7.2 is applicable to normal modal logics with serial or transitive or irreflexive or almost reflexive or almost symmetric Kripke frames.

\begin{theo}
Suppose that we take the same assumptions about $X$, $I$, $F$, $H$, $G_i^*$, $G_{\omega i}$  \rm (\it for any $i \in \omega$\rm) \it in Theorem 7.1 with 
$R_i^* = \{(*, g_j): 0 \leq j \leq i\}$ 

\smallskip 

\noindent $ \cup \{(g_j, g_k): 1 \leq j, k \leq i, 1 \leq k \leq i   \} \cup \{(\eta, \eta) : \eta \in G_{\omega i} \}. $

\smallskip 

\noindent If $H \subseteq F$ holds, then 
$B$-translation is faithful with respect to $X$.
\end{theo}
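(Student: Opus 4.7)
The plan is to reprise, almost verbatim, the construction used in the proof of Theorem 4.1 and its reuse in Theorems 6.1, 7.1, and 7.2, with the accessibility relation replaced by the one prescribed in the statement. Given $\phi$ of $\bf L_1$ with $\not\vdash_{\bf L_1} \phi$, I would invoke Theorem 2.4 and the Fundamental Theorem 2.2 to extract a Hintikka formula $\psi$ of $\phi$ at the end of an open branch of some normal tableau for $\phi$, and set $n = |ChainQ(\psi)|$. If $n = 0$, the simple few-world construction used in the same case of Theorem 4.1 applies, with $(G_0^*, R_0^*) \in H \subseteq F$ serving as the underlying frame. For the principal case $n \geq 1$, I would pick $(G_n^*, R_n^*) \in H \subseteq F$ (available since $\omega \subseteq I$ and $H \subseteq F$) and equip it with the valuation defined by clauses (M3)(i)--(v) in the proof of Theorem 4.1 on the worlds $*, g_1, \dots, g_n$ and on $G_{\omega n}$, extended by $V(p, g_0) = 0$ on the fresh world $g_0$ for every propositional variable $p$.

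The essential observation is that the truth value at $*$ of any $B$-translated formula is determined solely by the valuation at $*$ and at the $R_n^*$-successors of $*$, which are exactly $g_0, g_1, \dots, g_n$. Consequently the additional intra-cluster arrows $\{(g_j, g_k) : 1 \leq j, k \leq n\}$, which are what distinguish the hypothesis of this theorem from those of Theorems 6.1, 7.1, and 7.2, are invisible from $*$. The new world $g_0$, being uniformly false, vacuously validates every implication $p_a \supset p_b$ and so contributes trivially to each $\Box(p_a \supset p_b)$ evaluated at $*$. The seven-case analysis (Case 1)--(Case 7) in Theorem 4.1 therefore transcribes unchanged, delivering $(\dagger)$ and $(\dagger \dagger)$ and hence $* \not\models B(\psi)$. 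Since $\phi$ is a p.p. of $\psi$ and $B(\cdot)$ commutes with disjunction and negation, Theorem 2.5 then yields $* \not\models B(\phi)$; as $(G_n^*, R_n^*) \in F$ and $X$ is complete with respect to $F$, we conclude $\not\vdash_X B(\phi)$, finishing the argument.

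The only place where one must be careful is the doubly modalised subformula inside $B(\epsilon ab) = p_a \wedge \Box(p_a \supset p_b) \wedge (p_b \supset \Box(p_b \supset p_a))$, since the added edges among the $g_j$ could in principle alter the truth of the inner $\Box(p_b \supset p_a)$ at some $g_j$. However, this inner $\Box$ matters to the evaluation at $*$ only when $V(p_b, *) = 1$, i.e., when $b$ lies in some chain of $\psi$, and in that case the chain assignment in (M3)(ii) makes $p_b \equiv p_a$ true at each $g_k$ with $1 \leq k \leq n$ (and $p_b$ false at $g_0$), so $g_j \models \Box(p_b \supset p_a)$ holds for every $g_j$ that $*$ sees. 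After this verification, the argument reduces to a transcription of the earlier proofs, and no further obstacle is expected.
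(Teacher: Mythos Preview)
Your proposal is correct and follows essentially the same route as the paper, whose proof merely cites Theorems 4.1, 5.3 and 6.1: the decisive observation, which you state explicitly, is that every $\Box$ in $B(\psi)$ occurs at modal depth one, so the extra edges among $g_1,\dots,g_n$ are invisible from $*$, and the case analysis of Theorem 4.1 carries over unchanged (your treatment of the spare world $g_0$ is also fine). Your final paragraph, however, rests on a misreading: there is no ``doubly modalised'' subformula in $B(\epsilon ab)=p_a\wedge\Box(p_a\supset p_b)\wedge(p_b\supset\Box(p_b\supset p_a))$, both boxes sit at depth one, so the worry you raise (and then dismiss) never arises and that paragraph can simply be deleted.
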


\begin{proof} 
From Theorems 4.1, 5.3 and 6.1.
\end{proof}

Theorem 7.3 is applicable to normal modal logics with serial or transitive or irreflexive or Euclidean or almost reflexive or almost symmetric Kripke frames.

\begin{defi} By $\bf ExtK_{d}$, we denote the set of propositional modal logics stronger than $\bf K$ such that they are
complete w.r.t. a set of serial or transitive or irreflexive or Euclidean or almost reflexive or almost symmetric Kripke frames.
\end{defi}

\begin{theo} For any logic $X \in \mathbf{ExtK_{d}}$, $B$-translation is faithful 
with respect to $X$.
\end{theo}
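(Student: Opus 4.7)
The plan is to reduce Theorem 7.5 to Theorems 7.1, 7.2 and 7.3 by a short case analysis on which of the six accessibility conditions named in Definition 7.4---seriality, transitivity, irreflexivity, Euclideanness, almost reflexivity, almost symmetry---is carried by the frame class witnessing the completeness of $X$. By hypothesis, $X \in \mathbf{ExtK_d}$ supplies at least one such property, say $P$, together with a frame set $F$ of $P$-frames with respect to which $X$ is complete.

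For each of the six choices of $P$, at least one of Theorems 7.1--7.3 provides a frame set $H = \{(G_i^*, R_i^*) : i \in \omega\}$ whose relations $R_i^*$ all satisfy $P$; the remarks immediately after those theorems already record this correspondence, and in particular Theorem 7.3 is listed as applicable to all six properties. Concretely I would: (i) fix $P$ from the hypothesis $X \in \mathbf{ExtK_d}$; (ii) check by direct inspection of the definition of $R_i^*$ in the appropriate theorem that $R_i^*$ really yields $P$-frames, so that the inclusion $H \subseteq F$ demanded by that theorem holds; (iii) invoke the theorem to conclude that $B$-translation is faithful with respect to $X$. No fresh Hintikka-formula or Kripke-model construction is needed, since all of that machinery has already been performed inside Theorems 4.1, 5.3 and 6.1 and imported via Theorems 7.1--7.3.

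The main---and essentially only---obstacle is the bookkeeping in step (ii): for each of the six properties one must verify that the specific $R_i^*$ indeed satisfies it. For instance, for Euclideanness one checks that if $(x, y), (x, z) \in R_i^*$ then $(y, z) \in R_i^*$; in the $R_i^*$ of Theorem 7.3 the only nontrivial source of such triples is the pair $(*, g_j), (*, g_k)$, and the clause $\{(g_j, g_k) : 1 \leq j, k \leq i\}$ then supplies $(g_j, g_k) \in R_i^*$. Analogous, entirely routine, case checks are required for seriality (witnessed by the $*$-edges together with the self-loops on $G_{\omega i}$ and on $g_j$), transitivity (the $g$-clique with self-loops is transitively closed and carries the $*$-edges forward), irreflexivity (achieved by deleting the self-loops in the construction of Theorem 7.1), almost reflexivity (the self-loops on the $g_j$), and almost symmetry. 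None of these verifications is conceptually difficult, but all of them must be carried out to complete the case analysis, after which Theorem 7.5 follows immediately.
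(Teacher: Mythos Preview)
Your approach is correct and matches the paper's own proof, which is even terser: it simply cites Theorem 7.3, relying on the remark following that theorem that its $R_i^*$ is applicable to all six properties listed in Definition 7.4. Your case analysis over Theorems 7.1--7.3 is a slightly more explicit version of the same reduction, and the bookkeeping you outline for step (ii) is exactly what underlies the paper's one-line appeal.
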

\begin{proof}
From Theorem 7.3.
\end{proof}

\begin{coro} For any logic $X \in \mathbf{ExtK_{d}}$, $B$-translation is an embedding of $\bf L_1$ in $X$. 
\end{coro}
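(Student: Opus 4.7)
The plan is to verify that the two ingredients of an embedding, namely soundness and faithfulness of $B$-translation, both hold for every $X \in \mathbf{ExtK_{d}}$. Faithfulness is exactly Theorem 7.5, so only soundness remains.

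For soundness, I would argue as follows. By the definition of $\mathbf{ExtK_{d}}$ and condition (nor.1) in the definition of normal modal logic, every $X \in \mathbf{ExtK_{d}}$ satisfies $\mathbf{K} \subseteq X$. It therefore suffices to establish soundness of $B$-translation with respect to $\mathbf{K}$, i.e., $\vdash_{\mathbf{L_1}} \phi \Rightarrow \vdash_{\mathbf{K}} B(\phi)$, for then the implication $\vdash_{\mathbf{L_1}} \phi \Rightarrow \vdash_{X} B(\phi)$ follows by inclusion of theorems. This soundness is standard (it is implicit in Blass \cite{blass} and in the discussion of B-translation in \S1), but if one wishes to give it explicitly the plan is to verify that $B(\mathrm{Ax1})$, $B(\mathrm{Ax2})$ and $B(\mathrm{Ax3})$ are all $\mathbf{K}$-theorems, and that $B$ commutes with the Boolean connectives so that the classical propositional tautologies of $\mathbf{L_1}$ are mapped to $\mathbf{K}$-tautologies; modus ponens is preserved trivially since $B$ respects implication.

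Concretely, for each of (Ax1)--(Ax3) I would unpack $B(\epsilon ab) = p_a \wedge \Box(p_a \supset p_b) \wedge (p_b \supset \Box(p_b \supset p_a))$ and check validity on an arbitrary Kripke model. For instance, (Ax1) amounts to showing that $B(\epsilon ab) \supset B(\epsilon aa)$ is $\mathbf{K}$-valid, which reduces to the tautology that $p_a$ holds implies $p_a$ holds and $\Box(p_a \supset p_a)$ holds, etc. Axioms (Ax2) and (Ax3) require a modest but routine modal argument using the $\mathbf{K}$-axiom $\Box(\phi \supset \psi) \supset (\Box \phi \supset \Box \psi)$ and necessitation; these are precisely the verifications carried out in Blass's original paper.

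Combining soundness and faithfulness, we obtain for any $X \in \mathbf{ExtK_{d}}$ and any formula $\phi$ of $\mathbf{L_1}$ the biconditional $\vdash_{\mathbf{L_1}} \phi \Leftrightarrow \vdash_{X} B(\phi)$, which is the defining property of an embedding. I do not anticipate any real obstacle here: the soundness step is routine and the faithfulness step has already been absorbed into Theorem 7.5, so the proof is essentially a one-line citation of Theorem 7.5 together with the remark that $\mathbf{K} \subseteq X$.
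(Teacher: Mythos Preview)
Your proposal is correct and matches the paper's approach: the paper's proof of this corollary is literally ``From Theorem 7.5,'' with soundness left implicit (it is taken as known from Blass \cite{blass} and the inclusion $\mathbf{K}\subseteq X$, exactly as you argue). If anything, your write-up is more complete than the paper's, since you spell out the soundness step that the paper silently absorbs.
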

\begin{proof}
From Theorem 7.5.
\end{proof}

We recall the naming of modal logics as follows (refer to e.g. Poggiolesi \cite{pog} and Ono \cite{ono1}, 
also see Bull and Segerberg \cite{bulseg}):

\smallskip

$\bf KD$: $\bf K$ + $\Box \phi \supset \Diamond \phi$ ($\bf D$, serial relation)

$\bf K4$: $\bf K$ + $\Box \phi \supset \Box \Box \phi$ ($\bf 4$, transitive relation)

$\bf KD4$: $\bf K$ + $\bf D$ + $\bf 4$ (serial and transitive relation)

$\bf KB$: $\bf K$ + $\phi \supset \Box \Diamond \phi$ ($\bf B$, symmetric relation)

\smallskip

\noindent where $+$ means $\oplus$ in the sense of \cite{cz}, which the system is closed under modus ponens, substitution and 
the rule of necessitation. We use $+$ in the sense eveywhere in this paper. For those logics, we can give the corresponding general theorems. 

\begin{theo}
Suppose that we take the same assumptions about $X$, $I$, $F$, $H$, $G_i^*$, $G_{\omega i}$ \rm (\it for any $i \in \omega$\rm) \it in Theorem 7.1 with 
$$R_i^* = \{(*, g_j): 0 \leq j \leq i\} \cup \{(g_j, *): 0 \leq j \leq i\} \cup $$
$$\{(g_j, g_k): j \neq k, 1 \leq j \leq i, 1 \leq k \leq i \}$$ $$ \cup \{(\eta, \eta) : \eta \in G_{\omega i} \}. $$
If $H \subseteq F$ holds, then 
$B$-translation is faithful with respect to $X$.
\end{theo}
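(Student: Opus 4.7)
The plan is to reuse, nearly verbatim, the model-construction technique of Theorem 4.1 and of its recyclings in Theorems 5.3, 6.1, 7.1, 7.2 and 7.3, simply substituting the new accessibility relation $R_i^*$ prescribed in the statement. The central observation that makes all these generalisations go through by copying is that for every formula $\phi$ of $\bf L_1$, the translation $B(\phi)$ has modal depth at most $1$: by clauses (2.i)--(2.iii), $B(\phi)$ is a Boolean combination of atoms $B(\epsilon ab) = p_a \wedge \Box (p_a \supset p_b) \wedge . p_b \supset \Box (p_b \supset p_a)$, in each of which $\Box$ occurs only at the top level and is never nested. Consequently, the truth value of $B(\phi)$ at the distinguished world $*$ depends only on the valuation at $*$ and on the propositional valuation at the direct $R_i^*$-successors of $*$, namely $g_0, g_1, \dots, g_i$; it is blind to every other edge of $R_i^*$.

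Given $\phi$ with $\not\vdash_{\bf TL_1} \phi$, Theorem 2.2 supplies a Hintikka formula $\psi$ of $\phi$; let $n$ be the cardinality of $ChainQ(\psi)$. When $n \geq 1$, I would take the frame $(G_n^*, R_n^*)$, which by the hypothesis $H \subseteq F$ belongs to the completeness class $F$ of $X$, and I would define the valuation $V$ exactly as in (M3) of Theorem 4.1: $V(p_a, *) = 1$ iff $a \in CN_{\psi}$; the matrix $(V(p_{c_i}, g_j))_{1 \leq i, j \leq n}$ on chain representatives is the $n \times n$ identity matrix; the tail assignment of (M3).(iii) is inherited unchanged; $V(p, g_j) = 0$ for every remaining propositional variable, and $V(p, \eta) = 1$ for every $\eta \in G_{\omega n}$. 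When $n = 0$, I would take the corresponding chain-free frame exactly as in Theorem 4.1.

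The verification of $(\dagger)$ $* \not\models B(D_k)$ for every atomic $\rm p.p.$ $D_k$ of $\psi$, and $(\dagger\dagger)$ $* \models B(E_k)$ for every atomic $\rm n.p.$ $E_k$ of $\psi$, then goes through word for word as in Cases 1--7 of Theorem 4.1, because each such check unfolds to a conjunction of a claim about a single propositional letter at $*$ and a universally quantified propositional implication over the direct successors of $*$; and both pieces of data are determined by $V$ in precisely the same way as in Theorem 4.1. The extra edges $\{(g_j, *) : 0 \leq j \leq n\}$ and $\{(g_j, g_k) : j \neq k,\ 1 \leq j, k \leq n\}$, as well as the reflexive loops on $G_{\omega n}$, are simply invisible at $*$ to any formula of modal depth at most $1$. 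Since $\phi$ is a $\rm p.p.$ of $\psi$ and $B$ commutes with $\vee$ and $\neg$, Theorem 2.5 applied to the $B$-translated formula then yields $* \not\models B(\phi)$.

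The hypothesis $H \subseteq F$ together with the completeness of $X$ with respect to $F$ forces $\not\vdash_X B(\phi)$; Theorem 2.4 closes the contrapositive and delivers faithfulness. The one conceptually non-routine step, and therefore what I would flag as the \emph{main obstacle}, is to convince oneself that the modal-depth-$1$ character of $B$ really does make the construction completely robust under arbitrary changes to $R_i^*$ that preserve the set of direct $R_i^*$-successors of $*$. The only mildly non-cosmetic wrinkle compared with Theorems 7.1--7.3 is the presence of the back-edges $\{(g_j, *)\}$, which is what makes the present frame potentially relevant to symmetric-type logics such as $\bf KB$; but since these back-edges leave a successor of $*$ to reach $*$ itself, and since $B(\phi)$ has no nested $\Box$, they still lie strictly outside the modal-depth-$1$ horizon of $B(\phi)$ at $*$. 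Everything else is bookkeeping imported from Theorem 4.1.
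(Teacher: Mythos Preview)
Your proposal is correct and follows essentially the same approach as the paper: both reuse the model construction of Theorem 4.1 verbatim and observe that the additional edges $\{(g_j,*)\}$ and $\{(g_j,g_k):j\neq k\}$ are irrelevant to the evaluation of $B(\phi)$ at $*$. Your explicit modal-depth-$1$ argument simply makes precise what the paper states tersely as ``we do not need take care of $\{(g_j,*):0\leq j\leq i\}$, since we may only consider the worlds which $*$ can access.''
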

\begin{proof} 
From Theorem 4.1, 5.3 and 6.1, we can take the same model construction in the proof of Theorem 4.1 and prove it. 
We do not need take care of $\{(g_j, *): 0 \leq j \leq i\}$, since we may only consider the worlds which $*$ can access.
\end{proof}

Theorem 7.7 is applicable to normal modal logics with serial or irreflexive or Euclidean or symmetric Kripke frames.

\begin{theo}
Suppose that we take the same assumptions about $X$, $I$, $F$, $H$, $G_i^*$, $G_{\omega i}$ \rm (\it for any $i \in \omega$\rm) \it in Theorem 7.1 with 
$$R_i^* = \{(g_j, g_k): j \neq k, 1 \leq j \leq i, 1 \leq k \leq i \} \cup \{(*, *)\}.$$
If $H \subseteq F$ holds, then 
$B$-translation is faithful with respect to $X$.
\end{theo}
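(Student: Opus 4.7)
The plan is to mirror the template used for Theorems 4.1, 5.3, 6.1, 7.1, 7.2, 7.3 and 7.7: contrapose via Theorem 2.4 and show that if $\phi$ is not provable in $\bf TL_1$ then $B(\phi)$ is not valid in $X$. Starting from a normal tableau for $\phi$ with an open branch ending in a Hintikka formula $\psi$, I would enumerate $ChainQ(\psi) = \{C_1, \ldots, C_n\}$, the tails, and the remaining name variables via Theorem 3.13, and build a Kripke model on one of the frames $(G_i^*, R_i^*) \in H \subseteq F$. Completeness of $X$ with respect to $F$ together with Theorem 2.5 then yields $\not\vdash_X B(\phi)$ once $B(\psi)$ is falsified somewhere in such a model.

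First, I pick $i \geq n$ (say $i = n+1$) so that $g_1, \ldots, g_n$ can play the chain-witness role from (M3).(ii)--(v) of Theorem 4.1: on these worlds I copy the unit-matrix assignment verbatim, handle tails by (M3).(iii), remaining propositional variables by (M3).(iv), and assign every propositional variable the value $1$ on $G_{\omega i}$ as in (M3).(v). The novel feature of the present $R_i^*$ is that it splits the frame into two disconnected regions, an isolated reflexive point $\{*\}$ and an irreflexive complete digraph on $\{g_1, \ldots, g_i\}$ (with $g_0$ and the extra $g_j$ essentially spectators), so choosing $*$ as the falsifying world would collapse $\Box$ at $*$ to the identity and would typically \emph{validate} $B(\epsilon ab)$ rather than falsify it.

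Second, the key adaptation is to relocate the falsifying world to $g_{n+1}$, whose $R_i^*$-successors are exactly $\{g_1, \ldots, g_n\}$, matching one-for-one the successor set of $*$ in the construction of Theorem 4.1. I assign $V(p_a, g_{n+1}) = 1$ iff $a \in CN_{\psi}$, mirroring (M3).(i). Then the verification of the two checkpoints $(\dagger)$ and $(\dagger\dagger)$ at $g_{n+1}$ reduces to the case split (Case 1)--(Case 7) from Theorem 4.1, since the $R_i^*$-accessible worlds from $g_{n+1}$ are precisely the same chain-witnesses $g_1, \ldots, g_n$ carrying the same valuation as in the original construction.

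The main obstacle is to justify that shifting the base point from $*$ to $g_{n+1}$ does not disturb the original case analysis. The subtle points are that $g_{n+1}$ itself carries no self-loop (the $g$-digraph is irreflexive) and that the disconnected component $\{*\}$ with its loop $(*,*)$ plays no role in the truth of $B(\psi)$ at $g_{n+1}$. Inspection of Cases 1--7 of Theorem 4.1 shows that the argument there only ever evaluates $V(\cdot,\ast)$ directly and $V(\cdot, g_j)$ under $\Box$ in the successors of the base; it never requires a self-loop at the base. Hence the relocation is benign, and once this is verified the remainder of the argument is a verbatim transcription of that of Theorem 4.1, delivering $g_{n+1} \not\models B(\psi)$ and therefore $\not\vdash_X B(\phi)$.
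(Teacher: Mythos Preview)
Your argument is correct and in fact cleaner than the paper's own sketch for the relation as literally stated. The paper keeps the falsifying world at $*$ and writes
\[
* \models \Box (p_{x_i} \supset p_{x_j}) \ \Leftrightarrow\ * \models (p_{x_i} \supset p_{x_j}) \ \wedge\ g_1 \models (p_{x_i} \supset p_{x_j}) \ \wedge \cdots \wedge\ g_n \models (p_{x_i} \supset p_{x_j}),
\]
then re-checks only (Case~1) and (Case~2) for the effect of the new self-loop $(*,*)$. But under the printed $R_i^*$ the sole successor of $*$ is $*$ itself, so this expansion is not available; exactly as you observed, $\Box$ collapses at $*$ and (Case~1) would then yield $*\models B(\epsilon x_ix_j)$ rather than the required failure. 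The paper is evidently arguing as though the edges $(*,g_j)$ were also present (compare the relations in Theorems~7.3 and~7.9), so the stated $R_i^*$ is almost certainly a misprint omitting $\{(*,g_j):0\le j\le i\}$.

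Your relocation of the base point to $g_{n+1}$ with $i=n+1$, so that its successor set is exactly $\{g_1,\ldots,g_n\}$, together with transplanting (M3).(i) to $g_{n+1}$, is a genuinely different route: it proves the theorem for $R_i^*$ \emph{as written}, without needing the missing edges. The paper's route, once $(*,g_j)$ is restored, has the small convenience that only two of the seven cases need revisiting (the self-loop at $*$ is the sole novelty); your route instead requires the observation that none of Cases~1--7 ever uses a self-loop at the base world, which you correctly supply. Both arguments are short; yours matches the printed statement, the paper's matches its apparent intent.
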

\begin{proof} 
From Theorem 4.1, 5.3 and 6.1, we can take the same model construction in the proof of Theorem 4.1 and verify the theorem.
What we need to check is that of the cases (Case 1) and (Case 2) with the formulas as such 
$$* \models \Box (p_{x_i} \supset p_{x_j}) \Leftrightarrow .  * \models p_{x_i} \supset p_{x_j} \wedge  
g_1 \models p_{x_i} \supset p_{x_j} \wedge  g_2 \models p_{x_i} \supset p_{x_j}$$
$$\wedge \cdots \wedge g_n \models p_{x_i} \supset p_{x_j}.$$
In (Case 1), $* \models p_{x_i} \supset p_{x_j}$ does not effect the proof of $* \not\models \Box (p_{x_i} \supset p_{x_j})$ in the proof of Theorem 4.1. 
In (Case 2),  $* \not\models p_{x_i} \supset p_{y_j}$ leads to  $* \not\models \Box (p_{x_i} \supset p_{y_j})$. 
From this we have  $* \not\models B(D_k)$. We remark that we shall take the same treatment as in the proof of Theorem 4.1, if the number of chain is 0.
\end{proof}

Theorem 7.8 is applicable to normal modal logics with serial or transitive or Euclidean Kripke frames.

\begin{theo}
Suppose that we take the same assumptions about $X$, $I$, $F$, $H$, $G_i^*$, $G_{\omega i}$ \rm (\it for any $i \in \omega$\rm) \it in Theorem 7.1 with 
$$R_i^* = \{(*, g_j): 0 \leq j \leq i\} \cup \{(g_j, *): 0 \leq j \leq i\} \cup \{(*, *)\}$$
If $H \subseteq F$ holds, then 
$B$-translation is faithful with respect to $X$.
\end{theo}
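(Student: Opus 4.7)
The plan is to reprise the model construction of Theorem 4.1, with the frame drawn directly from $H$. Suppose $\phi$ is not a theorem of $\mathbf{L_1}$; then by Theorems 2.4 and 2.2 some open branch of a normal tableau of $\phi$ ends with a Hintikka formula $\psi$, of which $\phi$ is a positive part. Let $n = |ChainQ(\psi)|$ and choose the frame $(G_n^*, R_n^*) \in H \subseteq F$, which by assumption is a frame for $X$. By Theorem 2.5 it suffices to exhibit a valuation on this frame that falsifies $B(\psi)$ at $*$, for then $* \not\models B(\phi)$; completeness of $X$ then gives $\not\vdash_X B(\phi)$, whence $\not\vdash_{\mathbf{L_1}} \phi$ via Theorem 2.4.

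For the valuation $V$, I would take clauses (M3).(i)--(v) of the proof of Theorem 4.1 verbatim: the unit-matrix assignment at $g_1, \dots, g_n$ driven by the chain representatives, the tail assignment of (M3).(iii), $V(p_a, *) = 1$ iff $a \in CN_\psi$, and trivial values at $g_0$ and on $G_{\omega n}$. Then $(\dagger)$ and $(\dagger\dagger)$ are verified by the same seven-case analysis as in Theorems 4.1 and 7.8. The backward edges $(g_j, *) \in R_n^*$ do not affect the evaluation of $\Box$-formulas at $*$, as in the proof of Theorem 7.7. The standard matrix-and-tail bookkeeping at the $g_\alpha$-successors of $*$ handles Cases 1--3 and Case 6 exactly as before, and Cases 4 and 5 for the positive parts $\epsilon y_i b$ and $\epsilon z_i b$ close because $V(p_{y_i}, *) = V(p_{z_i}, *) = 0$ by (M3).(i) and Theorem 3.13.

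The main obstacle will be the self-loop $(*, *) \in R_n^*$ acting on Case 7, where $E_k = \epsilon x_i y_j$ with $y_j$ a tail of the chain of $x_i$: in Theorem 4.1 the conjunct $\Box(p_{x_i} \supset p_{y_j})$ at $*$ was discharged by examining only the $g_\alpha$'s, but the self-loop now requires that implication also at $*$, while (M3).(i) gives $* \models p_{x_i}$ and $* \not\models p_{y_j}$. I expect the fix to be a careful local adjustment of $V$ at $*$, setting $V(p_a, *) = 1$ for the specific tails $a$ demanded by the atomic negative parts of $\psi$; after this, Case 7 closes at $*$ by trivial implication, and one re-verifies Cases 4 and 5 by invoking Lemma 3.14 together with the Hintikka conditions of Definition 2.1 (in particular clauses (3)--(5)) and the disjoint decomposition $NV_\psi = CN_\psi \amalg TN_\psi \amalg Rest_\psi$ of Theorem 3.13 to locate a $g_\alpha$-witness still falsifying the relevant $\Box$-conjunct of $B(D_k)$. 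With this bookkeeping complete, the proof closes in the same shape as Theorems 7.7 and 7.8.
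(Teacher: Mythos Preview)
Your diagnosis of the obstacle is sharper than the paper's own proof, which simply says ``similar to that of Theorem 7.8'' and (like the proof of 7.8) only rechecks Cases~1 and~2. You are right that the self-loop $(*,*)$ threatens Case~7: with the original (M3).(i) one has $*\models p_{x_i}$ and $*\not\models p_{y_j}$, so $*\not\models p_{x_i}\supset p_{y_j}$ and the second conjunct of $B(\epsilon x_i y_j)$ fails at $*$.

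However, your proposed fix---promoting $V(p_{y_j},*)$ to $1$ for every tail $y_j$---does not close the argument. Once $*\models p_{y_j}$, the \emph{third} conjunct $p_{y_j}\supset\Box(p_{y_j}\supset p_{x_i})$ is no longer vacuous at $*$, and it need not hold: if $y_j$ is simultaneously a tail of two distinct chains $C_\alpha$ and $C_\beta$ (which Definition~3.10 and Proposition~3.11 allow; nothing in Definition~2.1 links two n.p.'s $\epsilon a y_j$ and $\epsilon b y_j$ with $a\in C_\alpha$, $b\in C_\beta$), then by (M3).(iii) $V(p_{y_j},g_\beta)=1$ while $V(p_{x_i},g_\beta)=0$, so $g_\beta\not\models p_{y_j}\supset p_{x_i}$ and $*\not\models\Box(p_{y_j}\supset p_{x_i})$. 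Thus $(\dagger\dagger)$ fails for $E_k=\epsilon x_i y_j$. The same adjustment also damages Case~4: with $V(p_{y_i},*)=1$, the positive part $D_k=\epsilon y_i y_i$ (which Lemma~3.14 permits as a p.p.\ since $\epsilon y_i y_i$ is never an n.p.) now satisfies all three conjuncts of $B(D_k)$ at $*$, contradicting $(\dagger)$. So the ``local adjustment at $*$'' you sketch is not enough; any repair must also revisit (M3).(iii) or otherwise control tails shared across chains, and your re-verification of Cases~4 and~7 would have to treat such shared tails and the diagonal $\epsilon y_i y_i$ explicitly.
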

\begin{proof} 
The proof is similar to that of Theorem 7.8.
\end{proof}

Theorem 7.9 is applicable to normal modal logics with serial or symmetric Kripke frames.

From Theorems 7.8 and 7.9, we can obtain the following theorem.

\begin{theo}
Let $X$ be one of logics $\bf KD$, $\bf KD4$ and $\bf KB$. $B$-translation is faithful 
with respect to $X$.
\end{theo}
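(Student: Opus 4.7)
The plan is to apply Theorems 7.8 and 7.9 directly to each of the three named logics, matching each with the theorem whose frame class it falls into. Recall that $\bf KD$ is complete with respect to serial Kripke frames, $\bf KD4$ with respect to serial and transitive frames, and $\bf KB$ with respect to symmetric frames. Since the remarks immediately following the proofs identify Theorem 7.8 as applicable to normal modal logics complete with respect to serial, transitive or Euclidean frames, and Theorem 7.9 as applicable to normal modal logics complete with respect to serial or symmetric frames, it suffices in each case to verify the hypothesis $H \subseteq F$ of the chosen theorem, where $F$ is a class of frames characterising the logic and $H$ is the set of frames constructed in that theorem.

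Concretely, I would handle $\bf KD$ and $\bf KD4$ via Theorem 7.8. For $\bf KD$ the only check is that the relation $R_i^*$ of Theorem 7.8 is serial: the pair $(*,*)$ gives $*$ an outgoing edge, the clause $\{(g_j,g_k):j\neq k,\,1\leq j,k\leq i\}$ gives each $g_j$ (for suitable $i$) a successor, and the diagonal $\{(\eta,\eta):\eta\in G_{\omega i}\}$ tacitly inherited from the Section~4 construction takes care of the remaining worlds. For $\bf KD4$ one additionally verifies that the same edge set is transitive, which reduces to a short inspection of compositions of the three kinds of edges present. I would dispatch $\bf KB$ via Theorem 7.9: its relation $R_i^* = \{(*,g_j)\}\cup\{(g_j,*)\}\cup\{(*,*)\}$ is patently symmetric, so the inclusion into the class of symmetric frames that characterises $\bf KB$ is immediate.

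In all three cases, once the inclusion $H\subseteq F$ is confirmed, the conclusion of the invoked theorem gives the faithfulness of $B$-translation with respect to $X$, completing the argument. The main point of friction, which is nevertheless minor, is checking transitivity of the edge set for $\bf KD4$: a composition $(g_j,g_k)\circ(g_k,g_\ell)$ with $j=\ell$ would demand the reflexive edge $(g_j,g_j)$, which is not listed in the edge set of Theorem 7.8, so one must either record that this edge is already implicitly allowed in the construction or observe that it may be adjoined without affecting the refutation of $B(\phi)$ at $*$, since the argument of Theorem 4.1 is insensitive to edges that do not start at $*$.
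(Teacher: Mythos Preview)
Your proposal is correct and follows essentially the same approach as the paper, which simply derives the theorem from Theorems~7.8 and~7.9 without spelling out which logic is handled by which. Your observation about the transitivity wrinkle in the edge set of Theorem~7.8 (the composition forcing $(g_j,g_j)$) and your proposed remedy are in fact more careful than the paper's own treatment, which glosses over this point.
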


\begin{coro} 
Let $X$ be one of logics $\bf KD$, $\bf KD4$ and $\bf KB$. $B$-translation is an embedding of $\bf L_1$ in $X$. 
\end{coro}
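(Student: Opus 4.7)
The plan is to combine Theorem 7.10 (faithfulness) with the soundness of $B$-translation with respect to each of $\bf KD$, $\bf KD4$ and $\bf KB$, since an embedding is by definition a translation that is both sound and faithful. The structure mirrors Corollaries 4.2, 5.4 and 6.4, where the corresponding faithfulness theorem is combined with an already-available soundness result.

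For the soundness direction, I would argue that $B$-translation is sound with respect to $\bf K$ (each of (Ax1), (Ax2), (Ax3) of $\bf L_1$ is mapped by $B$ to a theorem of $\bf K$, as verified in \cite{blass} and implicitly invoked throughout \S4--\S6). Because each of $\bf KD$, $\bf KD4$ and $\bf KB$ is a normal modal logic containing $\bf K$ and closed under modus ponens, substitution and necessitation, every $\bf K$-theorem is an $X$-theorem for $X \in \{\bf KD, \bf KD4, \bf KB\}$. A straightforward induction on the length of an $\bf L_1$-derivation then yields $\vdash_{\bf L_1} \phi \Rightarrow \vdash_X B(\phi)$.

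The faithfulness direction $\vdash_X B(\phi) \Rightarrow \vdash_{\bf L_1} \phi$ is precisely the statement of Theorem 7.10 for the three logics at hand. Conjoining the two implications gives the biconditional $\vdash_X B(\phi) \Leftrightarrow \vdash_{\bf L_1} \phi$, which is what it means for $B$ to be an embedding of $\bf L_1$ in $X$.

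I do not expect any real obstacle here: the substantive content has been absorbed into Theorem 7.10 (via the Kripke-frame constructions of Theorems 7.8 and 7.9, which are tailored to seriality, seriality-plus-transitivity, and symmetry respectively). The only item worth double-checking is that the auxiliary parts of $R_i^*$ used in those theorems genuinely lie inside the frame class characterizing each of $\bf KD$, $\bf KD4$, $\bf KB$, so that the hypothesis $H \subseteq F$ of Theorem 7.1 is in fact met for these logics; but this has already been noted in the remarks following Theorems 7.8 and 7.9.
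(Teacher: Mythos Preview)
Your proposal is correct and matches the paper's approach: the paper's proof is simply ``From Theorem 7.10,'' relying on the same two ingredients you identify (faithfulness from Theorem 7.10, soundness inherited from $\bf K$ since each of $\bf KD$, $\bf KD4$, $\bf KB$ is a normal extension of $\bf K$). You have merely spelled out the soundness direction more explicitly than the paper does, which is fine.
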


\begin{proof}
From Theorem 7.10.
\end{proof}

\begin{theo}
Let $X$ be a normal modal logic. If $X$ has the finite model property, then 'infinite', $G_i^* = \{*, g_0, g_1, \dots , g_i\} \cup G_{\omega i}$ and 
$R_i^*$ can be replaced by 
'finite', $G_i^* = \{*, g_0, g_1, \dots , g_i\}$ and $R_i^* - \{(\eta, \eta) : \eta \in G_{\omega i} \}$ respectively, in Theorems 6.1, 7.1, 7.2, 7.3, 7.7, 7.8 and 7.9.
\end{theo}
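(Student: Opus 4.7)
The plan is to observe that in the model construction of Theorem~4.1, and in all its generalizations cited in the statement, the ``infinite reflexive tail'' $G_{\omega i}$ together with its diagonal pairs $\{(\eta,\eta):\eta\in G_{\omega i}\}$ was introduced for one purpose only: to match the hypothesis that the frames in the set $F$ have infinite underlying sets. The author already flagged this explicitly after~(M3).(v), remarking that $*$ does not access any element of $G_\omega$ and that $G_\omega$ was inserted merely ``to keep a generality of models.'' The finite-model-property assumption eliminates that need, so the extra points and loops can simply be removed.

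First I would restate each of the target theorems under the FMP hypothesis: since $X$ has the FMP and is complete, $X$ is complete with respect to a set $F'$ of \emph{finite} Kripke frames satisfying the relevant frame conditions. The reformulated auxiliary family $H'$ consists of the frames $(G_i^{*\flat},R_i^{*\flat})$ with $G_i^{*\flat}=\{*,g_0,g_1,\dots,g_i\}$ and $R_i^{*\flat}=R_i^*-\{(\eta,\eta):\eta\in G_{\omega i}\}$; note that the subtracted pairs all lie outside $G_i^{*\flat}$, so the subtraction is effectively the restriction of $R_i^*$ to $G_i^{*\flat}$. I would then check that $H'\subseteq F'$, i.e.\ that the truncated frames still satisfy the frame condition in force (seriality, transitivity, Euclideanness, almost-reflexivity, almost-symmetry, irreflexivity, or symmetry, as the case may be). This is immediate, because the removed points form a disjoint union of isolated reflexive loops, and each of these properties is preserved when one discards a disconnected component.

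Next I would rerun the model construction used in Theorem~4.1 (and replayed in Theorems 5.3, 6.1, 7.1, 7.2, 7.3, 7.7, 7.8, 7.9) \emph{verbatim}, simply dropping clauses~(M1) tail and~(M3).(v), and keeping (M1)--(M3) restricted to the worlds $\{*,g_1,\dots,g_n\}$. The verification of $(\dagger)$ and $(\dagger\dagger)$ in Cases (1)--(7) never invokes any $\eta\in G_{\omega i}$, because $*$ accesses none of them. Hence the same valuation falsifies $B(\psi)$ at $*$ in the finite frame, and by completeness of $X$ with respect to $F'$ we conclude $\not\vdash_X B(\phi)$, giving the faithfulness in the finite setting.

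The only potential obstacle is the case $n=0$, where the original construction used a model containing two accessible worlds $*,g$ together with $G_\omega$; I would handle this by taking the finite frame $\{*,g\}$ with $R=\{(*,g)\}$ (or whichever minor adaptation is needed to satisfy seriality, Euclideanness, etc., in the particular theorem being specialized), using the already-established fact that $B(\epsilon ab)=p_a\wedge\Box(p_a\supset p_b)\wedge.\,p_b\supset\Box(p_b\supset p_a)$ is false at $*$ as soon as $V(p_a,*)=0$. This checks out for every frame class covered by the cited theorems, so the substitution claimed in Theorem~7.12 goes through uniformly.
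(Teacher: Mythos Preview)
Your proposal is correct and follows essentially the same approach as the paper: the paper's own proof simply observes that under the finite model property one may take $G_\omega=\emptyset$ in the construction of Theorem~4.1, whereupon all of Theorems~6.1, 7.1, 7.2, 7.3, 7.7, 7.8 and 7.9 go through unchanged. Your write-up is considerably more detailed---explicitly checking preservation of the frame conditions under removal of the disconnected reflexive loops and spelling out the $n=0$ case---but the underlying idea is identical.
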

\begin{proof}
Let $X$ be a normal modal logic. Suppose that $X$ has the finite model property. 
In the proof of Theorem 4.1, we can take $G_{\omega} = \emptyset$. Then all proofs of Theorems 6.1, 7.1, 7.2, 7.3, 7.7, 7.8 and 7.9 
follow the modification without any trouble.
\end{proof}

For the finite model property, refer to e.g. Carnielli and Pizzi \cite{carpi}, Ono \cite{ono1} (especially for algebraic models) and so on.


\section{Some open problems and conjectures}

I shall give several open problems below. 

\bf Open problem 1\rm . Is the condition of Theorems 6.1, 7.1, 7.2, 7.3, 7.7 and 7.8 ($\rm i.e.$, as \{(6.1.i) and (6.1.ii))\} and \{(7.1.i) and (7.1.ii))\}) 
the best possible one for the faithfulness of $B$-translation ?

\bf Open problem 2\rm . Give an algebraic proof of the faithfulness of $B$-translation.

\bf Open problem 3\rm . Search for possibilities of embedding Le\'{s}niewski's propositional ontology $\bf L_1$ in 
bi-intuitionistic logic and the display logic for bi-intuitionistic logic (see Gor\'{e} \cite{gore1} and Wansing \cite{wan1} 
for those logics).

\bf Open problem 4\rm . Search for possibilities of embedding Le\'{s}niewski's propositional ontology $\bf L_1$ in logics 
treated in Marx and Venema \cite{marx-venema}.

I think that this book is very important to consider. It is also worth considering Gabbay, Kurucz, Wolter and Zakharyaschev 
\cite{gab-many} for temporal and computational aspects, and products for modal logics, for example.

In relation to the theme of this paper, I shall give my conjectures.

\bf Conjecture 1\rm . $\bf L_1$ is embedded in some bimodal (multi) logic by some sound translation.

\bf Conjecture 2\rm . $\bf L_1$ is embedded in many other modal logics which are listed in, for example, 
Chagrov and Zakharyaschev \cite{cz}, Humberstone \cite{humber} and  Gabbay and Guenthner \cite{gab2}.

\bf Conjecture 3\rm . $\bf L_1$ is embedded in some temporal logics with several ways. 

\bf Conjecture 4 [Fundamental Conjecture]\rm . Le\'{s}niewski's elementary ontology $\bf L$ is embedded in some \it first-order 
\rm modal predicate logic.

As mentioned in the Introduction of this paper, this conjecture 4 is the most important open problem in the next stage of our direction of study. 
I believe that by introuducing a modal operator, we could reduce second-order to first-order for the embedding. 
In a similar direction, a certain attempt has been carried out in Urbaniak \cite{urbaniak-Thesis} and \cite{urbaniak-plural}. 
If we find such an embedding in a particular formal system with a modal operator of a definite strength, it would be another breakthrough in the study of 
Le\'{s}niewski's systems.

\paragraph{Acknowledgements.} 
I would like to thank Prof. Andreas Blass, Prof. Emeritus Mitio Takano, the late Prof. Stanis\/{l}aw \'{S}wierczkowski and the late Prof. Emeritus Arata Ishimoto 
for their valuable comments, stimulation and encouragement.  I would also like to thank, especially, the late Prof. Anne Troelstra and 
Prof. Dirk van Dalen for their encouragement. Further I appreciate anonymous referees for their comments to make this paper better.



\AuthorAdressEmail{Takao Inou\'{e}}{Department of Applied Informatics \newline Faculty of Science and Engineering\\
Hosei University\\
Kajino-cho 3-7-2\\
Koganei-shi, Tokyo, Japan}{takao.inoue.22@hosei.ac.jp \newline takaoapple@gmail.com}

\end{document}